\theoremstyle{plain} \numberwithin{equation}{section}
\theoremstyle{definition}
\newtheorem{theorem}{Theorem}[section]
\newtheorem{lem}[theorem]{Lemma}
\newtheorem{proposition}[theorem]{Proposition}
\newtheorem{definition}[theorem]{Definition}
\newtheorem{remark}[theorem]{Remark}
\newtheorem{example}[theorem]{Example}
\newtheorem{cor}[theorem]{Corollary}
\newcommand{\frall}{\:\forall\:}
\newtheorem{thm}{Theorem}
\theoremstyle{definition}
\title{Classification and Ideal Lattices of Leavitt Path Algebras}
\author{Seth Yoo, Yvan Grinspan}
\date{}
\begin{document}

\begin{abstract}
    Leavitt path algebras are free algebras subject to relations induced by directed graphs. This paper investigates the ideals of Leavitt path algebras, with an emphasis on the relationship between graph-theoretic properties of a directed graph and the ideals of the associated Leavitt path algebra. We begin by presenting a new proof of a fundamental result characterizing graded and non-graded ideals of a Leavitt path algebra using a condition on the number of closed paths at each vertex in its directed graph. Appealing to this result, we then classify the Leavitt path algebras of directed graphs with two vertices up to isomorphism and determine all possible lattice structures of a class of well-behaved ideals possessed by such algebras.
\end{abstract}

\maketitle

\section*{\textbf{Introduction}}

Leavitt path algebras are a class of non-commutative algebras induced by directed graphs that have close ties to $C^*$-algebras, $K$-theory, dynamics, and more \cite{ara2009k}\cite{arnone2023graded}\cite{ruiz2013ideal}. In this paper, we restrict our focus to Leavitt path algebras induced by directed graphs with finitely many vertices and edges between them, with our focus being on the connections between properties of directed graphs and the ideals of their induced Leavitt path algebras. Of particular importance will be the relationship between closed paths on graphs and a natural grading, according to path length, with which a Leavitt path algebra can be endowed. This relationship is presented by Abrams, Ara, and Molina \cite{abrams2017leavitt}\cite{abrams2005leavitt}, and it serves as the basis of our main results.  

The remainder of this paper is organized as follows.

In Section 1, we introduce general definitions of algebras, their ideals, concluding with a discussion of algebras defined by generators and relations. 

In Section 2, we define Leavitt path algebras, establish some basic results about them, and introduce the aforementioned grading. From there, we define graded ideals, which are the central object of study in this paper. 

In Section 3, we state and prove our main results. We begin with a theorem connecting a graph's properties with the ideals of its Leavitt path algebra (Theorem 2). Then, using this result to define a new class of ideals called $\lambda$-reducible ideals, we conclude our results by studying Leavitt path algebras induced by graphs with two vertices (Theorem 3 and Theorem 4). 

Finally, in Section 4, we indicate potential directions for further investigation. 

\section{\textbf{Algebras: Basic Definitions}}

\begin{definition}
    Let $K$ be a field and $A$ a vector space over $K$. $A$ is an \textbf{algebra} (also called a \textbf{$K$-algebra}) if endowed with an additional operation $A\times A\rightarrow A$, denoted by $\cdot$, such that the following hold for all $x,y,z\in A$ and $a,b\in K$:
    \begin{itemize}
        \item[$i$)] $(x\cdot y)\cdot z = x\cdot (y\cdot z)$ (Associativity)
        \item[$ii$)] $(x + y)\cdot z = x\cdot z + y\cdot z$ and $z\cdot (x + y) = z\cdot x + z\cdot y$ (Distributivity)
        \item[$iii$)] $(ax)\cdot (by) = (ab)(x\cdot y)$ (Compatibility with Scalars)
    \end{itemize}
\end{definition}
In other words, $K$-algebras are vector spaces over $K$ endowed with multiplication. Alternatively, $K$-algebras can also be viewed as rings endowed with scalar multiplication over $K$. For ease of notation, $x\cdot y$ will often be written as $xy$. 

\begin{definition}
    A subset $B\subseteq A$ is a \textbf{subalgebra} of $A$ if $B$ is an algebra under the operations defined on $A$. 
\end{definition}

\begin{definition}
    An \textbf{ideal} of an algebra $A$ is a subalgebra $I$ such that, for any $a\in A$ and any $r\in I$, $ar\in I$ and $ra \in I$. 
\end{definition}

\begin{definition}
    Let $X$ be a nonempty set. We refer to finite sequences of elements of $X$ as \textbf{words} over $X$. Then, the \textbf{free algebra} on $X$ over $K$ is the module over $K$ generated by words over $X$, endowed with multiplication defined as concatenation, meaning words are products of their elements. 
\end{definition}

Free algebras can be alternatively described as generalizations of polynomial rings where the variables, $X$, do not commute. Note that in this paper we only consider free algebras over fields, so the free algebras we will be discussing are vector spaces, rather than merely modules, endowed with concatenation as multiplication.

\begin{example} 
    Consider the free algebra $A$ over $K$ on the set $\{a,b\}$, or in other words, the vector space generated by words over $\{a,b\}$: $\{a, b, aa, bb, ab, ba, \ldots \}$. In this example, $a$ and $b$ are said to be \textbf{generators} of $A$, and $A$ is alternatively denoted by $K\langle a,b\rangle$. \\
    Now, consider the following algebra: $K\langle a,b \mid ab - ba = 0\rangle$. This is the algebra generated by $\{a,b\}$ such that $ab - ba = 0$, which implies $ab = ba$. In other words, this is the algebra generated by $\{a,b\}$ where $a$ and $b$ commute, and it easily follows that multiplication is commutative in general. As a vector space, the bases $ab$ and $ba$ are now identical, whereas those bases in $K\langle a,b\rangle$ are not. Here, $ab-ba$ is said to be a \textbf{relation} that defines $K\langle a,b \mid ab - ba = 0\rangle$. \\

    Observe that $K\langle a,b \mid ab - ba = 0\rangle\cong K[a,b]$, the polynomial ring in two variables with coefficients in $K$. Additionally, $K\langle a,b \mid ab - ba = 0, ab = 1\rangle\cong K[x,x^{-1}]$, the Laurent polynomials. 
\end{example} 

    This process of defining relations on free algebras is more rigorously described by taking the free algebra and considering its quotient by the ideal generated by the relation. 

\section{\textbf{Leavitt Path Algebras}}

In this section, we will define Leavitt path algebras along with other graph theoretic and algebraic terms that are relevant to our main results. 

\section*{Definitions and Basic Results}

\begin{definition} A \textbf{directed graph} $E = (E^0, E^1, r, s)$ consists of a set $E^0$ of \textbf{vertices}, a set $E^1$ of \textbf{edges}, and two functions $r,s : E^1\rightarrow E^0$. For a given edge $e \in E^1$, $s(e) = v$ and $r(e) = w$ if $e$ goes from vertex $v$ to vertex $w$. That is, $s$ and $r$ map edges to their \textbf{source} and \textbf{range} vertices respectively.

In this paper, we will assume that $E$ has both a finite number of vertices and edges; namely, we assume $E$ is a \textbf{finite graph}. Furthermore, the term \textbf{graph} will always refer to directed graphs. 
\end{definition}

\begin{definition} Let $E = (E^0, E^1, r, s)$ be a graph and consider subsets $F^0\subseteq E^0$ and $F^1 \subseteq E^1$ such that when $r$ and $s$ are restricted to $F^1$, we have that $r(F^1),s(F^1)\subseteq F^0$. We call $F = (F^0, F^1, r|_{F^1}, s|_{F^1})$ a \textbf{subgraph} of $E$. 
\end{definition}

The subgraph of a graph $E$ can simply be viewed as any graph contained within $E$. 

\begin{definition}
    Let $E$ be a directed graph. Define the \textbf{set of ghost edges of $E$} to be the set $(E^1)^* := \{e^* \mid e\in E^1\}$, where $e^*$ is an edge such that $r(e^*) = s(e)$ and $s(e^*) = r(e)$. 
\end{definition}

More colloquially, for each edge of a directed graph, its corresponding ghost edge is an edge between the same two vertices as the regular edge that points in the opposite direction.

\begin{definition} The \textbf{Leavitt path algebra} of a graph $E = (E^0, E^1, r, s)$, denoted by $L_K(E)$ or $L(E)$, is the free K-algebra generated by $E^0 \cup E^1 \cup (E^1)^*$\ subject to the relations: \\ 

($V$) $\frall v, w \in E^0, vw =
    \begin{cases}
        0 &\text{if } w\neq v, \\
        v &\text{if } w = v
    \end{cases}$ \\

($E_1$) $\frall e \in E^1, er(e) = s(e)e = e$\\

($E_2$) $\frall e \in E^1, e^*s(e)= r(e)e^* = e^*$ \\

($CK_1$) $\frall e, f \in E^1, e^*f =
    \begin{cases}
        0 &\text{if } e\neq f, \\
        r(e) &\text{if } e = f
    \end{cases}$ \\

($CK_2$): $\frall v \in E^0, v = \sum_{e \in E^1, s(e)=v} ee^*$.  
\end{definition}

Given that we are only considering finite graphs, we have that any $L(E)$ is unital with $1 = \sum_{v\in E^0} v$, which follows from $(V), (E_1)$, and $(E_2)$. \\

While most Leavitt path algebras are quite complicated, some familiar algebras arise naturally as Leavitt path algebras of specific graphs. For example, the Leavitt path algebra of the rose with one petal is isomorphic to the Laurent polynomials over $K$, and the Leavitt path algebra of the oriented n-line is isomorphic to the algebra of $n\times n$ $K$-matrices.

\begin{figure}[H]
    \centering
    \includegraphics[width=0.6\linewidth]{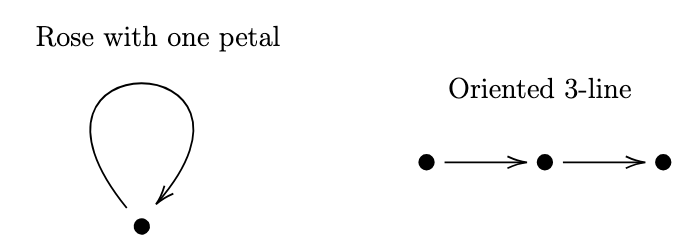}
    \caption{Graphs with familiar Leavitt path algebras}
    \label{fig1}
\end{figure}

\begin{definition}
    A \textbf{path} in $L(E)$ is a product of edges $e_1\dots e_n$ such that $r(e_i) = s(e_{i+1})$ for $i = 1,\dots n$. The set of all paths is denoted $Path(E)$. 
\end{definition}

\begin{definition}
    A \textbf{closed path} in $L(E)$ is a path $e_1\dots e_n$ such that $s(e_1) = r(e_n)$ and $n \geq 1$. 
\end{definition}

\begin{definition}
    A \textbf{closed simple path} in $L(E)$ is a closed path $e_1\dots e_n$ such that $s(e_i)\neq s(e_1)$ for $i = 2,\dots ,n$. Defining $s(e_1) = v$, we say $e_1\dots e_n$ is a closed simple path based at $v$. 
\end{definition}

\begin{definition}
    A \textbf{cycle} in $L(E)$ is a closed simple path $e_1\dots e_n$ such that $s(e_i)\neq s(e_j)$ for every $i\neq j$. 
\end{definition}

\begin{definition}
    The number of edges in an element $\mu = e_1\dots e_n\in Path(E)$ is called the \textbf{degree} of $\mu$. In this case, we say $deg(\mu) = n$. For any $v\in E^0$, we define $deg(v) = 0$. We can extend the domain of $s$ and $r$ to include all paths by defining $s(\mu) = s(e_1)$ and $r(\mu) = r(e_n)$. We define $\mu^* = e_n^* e_{n-1}^* \ldots e_1^*$. We will also extend the notion of $Path(E)$ to the set of all paths involving both edges and ghost edges; let us denote this extension by $Path(\hat{E})$
\end{definition}

\begin{definition}
   A \textbf{loop} in $L(E)$ is an edge $e$ where $s(e) = r(e)$. A loop is a cycle of degree 1. 
\end{definition} 

\begin{definition}
    A graph $E = (E^0, E^1, r, s)$ satisfies \textbf{Condition (K)} if for every $v \in E^0$, there are either zero or at least two closed simple paths based at $v$. 
\end{definition}

\begin{proposition}\label{2.12}
    Given a graph $E$, its Leavitt path algebra can be expressed as $L_K(E) = \text{span}_K(\{\alpha\beta^* \mid \alpha,\beta \in Path(E), r(\alpha) = r(\beta) \}).$
\end{proposition}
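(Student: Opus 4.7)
The plan is to show that every element of $L_K(E)$ is a $K$-linear combination of terms of the form $\alpha\beta^*$ with $r(\alpha) = r(\beta)$. Since $L_K(E)$ is generated as a $K$-algebra by $E^0 \cup E^1 \cup (E^1)^*$, every element is a $K$-linear combination of monomials (words) in these generators. So it suffices to show that each such monomial either vanishes or equals $\alpha\beta^*$ for paths $\alpha, \beta$ with common range, allowing vertices to play the role of length-zero paths via the convention $v^* = v$.

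I would proceed by induction on word length using two reduction moves. First, whenever a vertex $v$ appears adjacent to an edge or ghost edge, relations $(V), (E_1), (E_2)$ either absorb the vertex (e.g., $ev = e$ if $v = r(e)$) or force the word to zero (e.g., $ev = 0$ if $v \neq r(e)$). After this normalization, a nonzero word consists purely of edges and ghost edges, or else reduces to a single vertex (which is handled separately). Second, in any such word, one looks for a ``backward'' junction of the form $e^* f$ where a ghost edge is immediately followed by a regular edge. Relation $(CK_1)$ collapses such a junction to either $0$ (if $e \neq f$) or to the vertex $r(e)$ (if $e = f$), which is then fed back into the first move. Each successful application strictly decreases word length, so the process terminates.

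The resulting normal-form word must look like $e_1 \cdots e_m \, f_1^* \cdots f_k^*$, with all regular edges preceding all ghost edges. For this monomial to have survived the vertex-absorption step, adjacent source/range conditions must hold: $e_1 \cdots e_m$ is a path $\alpha$, $f_k \cdots f_1$ is a path $\beta$ (so that $f_1^* \cdots f_k^* = \beta^*$), and $r(e_m) = r(f_1)$, which is exactly $r(\alpha) = r(\beta)$. Degenerate cases (only edges, only ghost edges, or only a vertex) are absorbed into this format by letting the missing side be a length-zero path at the appropriate vertex.

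The main subtlety is the interplay between the two reduction moves: an application of $(CK_1)$ produces a new vertex in the interior of the word, which must be reabsorbed by the first move before one can resume hunting for further backward junctions, and one must check that termination in the two-move scheme really produces a legitimate pair of paths rather than some pathological residue. Carefully tracking how the forced compatibility of sources and ranges rules out everything except genuine paths in the final form is the only real content; the rest is bookkeeping with the defining relations.
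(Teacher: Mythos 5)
Your argument is correct and is precisely the standard rewriting procedure that the paper compresses into the single remark that the result ``follows from $(CK_1)$'': normalize words by absorbing vertices via $(V)$, $(E_1)$, $(E_2)$ and collapsing every $e^*f$ junction via $(CK_1)$ until all real edges precede all ghost edges. Your version simply makes the induction on word length and the termination argument explicit, so there is nothing to add.
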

    This result follows from $(CK_1).$ One important observation is that, while elements of the form $\alpha\beta^*$ span $L_K(E)$, a single element is not necessarily uniquely represented by an element of that form. 

\begin{example}
    Consider the simple example $L(E)$, where $E$ is the digraph with one vertex $v$ and one edge $e$ such that $r(e) = s(e) = v$. Then, we can express the element $v$ as $v = e^*e = e^*e^*e^*eee,$ etc. However, if we have an element of the form, say $ee^*e^*e^*e$, it can be expressed as $e^*$ by applications of $(CK1)$ and $(CK2)$. 
\end{example}

\begin{definition}
    Consider all representations $\alpha_i\beta_i^*$ of a given monomial $\mu\in L(E)$, and consider the set $S$ of all degrees of the $\beta_i$'s, namely $S = \{deg(\beta_i) \mid \alpha_i\beta_i^* = \mu\}$. We call the minimal element of $S$ the \textbf{degree in ghost edges} of $\mu$, denoted $gdeg(\mu)$. 
\end{definition}

\begin{definition}
    Let $X\subseteq E^0$ be a set of vertices in a directed graph $E$. $X$ is said to be \textbf{hereditary} if for any vertices $v$ and $w$, whenever $v\in X$ and there exists $\mu\in Path(E)$ such that $s(\mu) = v$ and $r(\mu) = w$, then $w\in X$. 
\end{definition}

\begin{definition}
    Let $X\subseteq E^0$ be a set of vertices in a directed graph $E$. $X$ is said to be \textbf{saturated} if, for any $v\in E^0$ such that $s^{-1}(v) \neq \varnothing$, $\{r(e) \mid s(e) = v\}\subseteq X \implies v\in X$. 
\end{definition}

Given a set $X$ of vertices, we call the smallest hereditary and saturated set containing $X$ the \textbf{hereditary saturated closure} of $X$, and denote it $T(X)$.

\section*{\texorpdfstring{$\mathbb{Z}$-Grading and Ideals}{Z-Grading and Ideals}}

Having introduced the definition of a Leavitt path algebra along with some graph theoretic terms which we will show related to these algebras, we now turn to the ideals of these algebras. These ideals, in particular distinctions surround a Leavitt path algebra's graded and non-graded ideals, will be central objects of study in the remainder of this paper. 

\begin{definition}
    A $K$-algebra is said to be \textbf{$\mathbb{Z}$-graded} if it can be broken up into a set of $K$-vector subspaces $\{A_i \mid i\in\mathbb{Z}\}$ such that the following hold:
    \begin{itemize}
        \item [($i$)] $A = \bigoplus_{i\in\mathbb{Z}}A_i$, and
        \item [($ii$)] $a_ia_j\in A_{i+j}$ for $a_i\in A_i, a_j\in A_j$ with $i,j\in\mathbb{Z}$.
    \end{itemize}
    Each $A_i$ in a $\mathbb{Z}$-graded algebra is called a \textbf{homogeneous component of degree $i$}. An element $a_i\in A_i$ is called a \textbf{homogeneous element of degree $i$}. 
\end{definition}

\begin{definition}
    An ideal $I$ in a $\mathbb{Z}$-graded $K$-algebra $A$ is a \textbf{graded ideal} if for any element $y \in I$, which can be decomposed into $y = \sum_{i\in\mathbb{Z}}y_i$ where each $y_i\in A_i$, we have $y_i\in I$ for every $i\in\mathbb{Z}$. 
\end{definition}

\begin{proposition}[Abrams, Ara, Molina]
    Let $E$ be a directed graph and $L(E)$ its associated Leavitt path algebra. Then, $L(E)$ is $\mathbb{Z}$-graded with a grading induced by path length. That is, $L(E) = \bigoplus_{i\in\mathbb{Z}}A_i$, where $x = \alpha\beta^* \in A_i$ if $deg(\alpha) - deg(\beta) = i$.
\end{proposition}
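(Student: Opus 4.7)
The strategy is to exhibit the grading on the ambient free algebra and then show that the defining relations of $L(E)$ are homogeneous, so the ideal they generate is graded, and hence the grading descends to the quotient.

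First, consider the free $K$-algebra $F = K\langle E^0 \cup E^1 \cup (E^1)^* \rangle$ on the generating set of $L(E)$. I would assign degrees to the generators by setting $\deg(v) = 0$ for every $v \in E^0$, $\deg(e) = 1$ for every $e \in E^1$, and $\deg(e^*) = -1$ for every $e^* \in (E^1)^*$, and then extend to words by summing the degrees of their letters. Letting $F_i$ denote the $K$-span of all words of total degree $i$, this directly gives a $\mathbb{Z}$-grading $F = \bigoplus_{i \in \mathbb{Z}} F_i$ on the free algebra, because the product of a word of degree $i$ and a word of degree $j$ is a concatenated word of degree $i+j$.

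Next, I would observe that the Leavitt path algebra is $L(E) = F/J$, where $J$ is the two-sided ideal generated by the (V), ($E_1$), ($E_2$), ($CK_1$), and ($CK_2$) relations. The crux of the argument is to check that each such relation is a homogeneous element of $F$: in (V), every term $vw$ and $v$ has degree $0$; in ($E_1$) and ($E_2$), each side has degree $1$ and $-1$ respectively; in ($CK_1$), both $e^*f$ and $r(e)$ are degree $0$; and in ($CK_2$), $v$ has degree $0$ while each summand $ee^*$ has degree $1 + (-1) = 0$. Since $J$ is generated by homogeneous elements, a standard fact yields that $J = \bigoplus_{i \in \mathbb{Z}} (J \cap F_i)$, i.e., $J$ is a graded ideal. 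Consequently, the quotient $L(E) = F/J$ inherits a $\mathbb{Z}$-grading with homogeneous components $A_i = (F_i + J)/J$.

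Finally, I would verify the explicit formula for the grading on spanning monomials. If $\alpha = e_1 \cdots e_n$ and $\beta = f_1 \cdots f_m$ are paths with $r(\alpha) = r(\beta)$, then as a word in $F$ we have $\alpha \beta^* = e_1 \cdots e_n f_m^* \cdots f_1^*$, whose total degree is $n + (-m) = \deg(\alpha) - \deg(\beta)$. Thus $\alpha\beta^*$ lies in $A_{\deg(\alpha) - \deg(\beta)}$, as claimed; combined with Proposition~\ref{2.12}, this confirms that every element of $L(E)$ decomposes into homogeneous components of the stated form. The main subtlety (and the only nontrivial step) is the passage from homogeneity of the generating relations to $J$ itself being a graded ideal; this is the standard lemma that an ideal generated by homogeneous elements in a graded algebra is graded, which I would invoke (or prove briefly by showing that the homogeneous components of any element of $J$ are themselves in $J$).
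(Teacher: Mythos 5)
Your proof plan is correct; the paper itself states this proposition without proof, citing Abrams--Ara--Molina, and your argument (grade the free algebra by $\deg(v)=0$, $\deg(e)=1$, $\deg(e^*)=-1$, check that all five defining relations are homogeneous, and descend the grading to the quotient) is precisely the standard proof from that reference. Note also that the key lemma you invoke --- that an ideal generated by homogeneous elements is graded --- is proved in the paper immediately after this proposition, so your route is fully consistent with the tools the paper makes available.
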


\begin{proposition}
    Any ideal $I$ of a $\mathbb{Z}$-graded algebra $A$ generated solely by homogeneous elements is a graded ideal.
\end{proposition}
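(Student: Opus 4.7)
The plan is to unpack the definition of ``ideal generated by a set'' and exploit the fact that in any $\mathbb{Z}$-graded algebra the decomposition $y = \sum_{i \in \mathbb{Z}} y_i$ into homogeneous components is unique. Let $S \subseteq A$ be a set of homogeneous generators for $I$, so that every $y \in I$ can be written as a finite sum
\[
y = \sum_{k} a_k s_k b_k,
\]
where $s_k \in S$ has some homogeneous degree $d_k$, and $a_k, b_k \in A$ are arbitrary (allowing $a_k = 1$ or $b_k = 1$ covers the cases of one-sided or no external multiplication). The goal is to show that each homogeneous component $y_n \in A_n$ of $y$ lies in $I$.

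First, I would decompose each $a_k$ and $b_k$ into their homogeneous parts, $a_k = \sum_i a_{k,i}$ with $a_{k,i} \in A_i$, and $b_k = \sum_j b_{k,j}$ with $b_{k,j} \in A_j$. Distributing over these finite sums yields
\[
y = \sum_{k} \sum_{i,j} a_{k,i}\, s_k\, b_{k,j}.
\]
By property $(ii)$ of a $\mathbb{Z}$-grading, each summand $a_{k,i}\, s_k\, b_{k,j}$ lies in $A_{i+d_k+j}$, so it is a homogeneous element of $A$. Moreover, since $s_k \in I$, each such summand is still in $I$.

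Next, I would collect summands by total degree: for each $n \in \mathbb{Z}$, set
\[
z_n \;=\; \sum_{k}\;\sum_{\substack{i,j \\ i + d_k + j = n}} a_{k,i}\, s_k\, b_{k,j}.
\]
Then $z_n \in A_n$ by construction, and $z_n \in I$ as a finite sum of elements of $I$. On the other hand, $y = \sum_{n} z_n$ is a decomposition of $y$ into homogeneous components, so by uniqueness of such decompositions in a $\mathbb{Z}$-graded algebra, $z_n = y_n$ for every $n$. Hence each $y_n \in I$, which is exactly the definition of $I$ being a graded ideal.

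The argument is essentially bookkeeping, so there is no substantive obstacle, only the need to be careful about two points: writing a truly generic element of the ideal generated by $S$ (one must allow arbitrary finite $K$-linear combinations of two-sided products $a s b$), and invoking uniqueness of homogeneous decompositions to identify the $z_n$ constructed above with the intrinsic components $y_n$ of $y$.
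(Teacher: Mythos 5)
Your proof is correct and follows essentially the same route as the paper: expand the coefficients of the generators into homogeneous pieces, observe that each resulting summand is homogeneous, lies in $I$, and has degree given by property $(ii)$ of the grading, then regroup by degree. You are somewhat more careful than the paper in allowing a general element to be a finite sum over several generators and in explicitly invoking uniqueness of the homogeneous decomposition to identify your $z_n$ with $y_n$, but these are refinements of the same argument rather than a different approach.
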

\begin{proof}
    Let $I$ be some ideal generated by homogeneous elements. Then, an arbitrary element of $I$ will be of the form $x\alpha y$, where $x,y\in A$ and $\alpha$ is a generator of $I$ and therefore homogeneous. These $x,y\in A$ are going to be sums of the form $x = \sum_{i=1}^n x_i$ and $y = \sum_{j=1}^m y_j$, where the $x_i$'s and $y_j$'s are monomials. We can expand $x\alpha y = (\sum_{i=1}^n x_i)\alpha(\sum_{j=1}^m y_j)$, and as $\alpha$ is a generator for $I$, we have that $x_i\alpha y_j \in I$ for each $i = 1,\dots ,n$ and $j = 1,\dots ,m$. Furthermore, as $\alpha$ is homogeneous and the $x_i$'s and $y_j$'s are monomials, we know that $x_i\alpha y_j$ is homogeneous as well. From this, it follows that all of the homogeneous components of $x\alpha y$ are also in $I$, completing our proof.
\end{proof}

\begin{proposition} Let $I \subseteq L(E)$ be an ideal. Then, $I \cap E^0$ is a hereditary and saturated set of vertices in E. 
\end{proposition}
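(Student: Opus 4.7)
Let $X = I \cap E^0$; the plan is to verify the hereditary and saturated conditions separately, each time exploiting a specific ideal-closure together with the defining relations of $L(E)$.

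For hereditariness, I would take $v \in X$ and a path $\mu = e_1 \cdots e_n$ with $s(\mu) = v$, $r(\mu) = w$, and aim to show $w \in I$. The key observation is that $\mu^{*} v \mu$ lies in $I$ because $I$ is a two-sided ideal and $v \in I$. I would then simplify this element to $w$ as follows: by $(V)$ and $(E_1)$, $v\mu = v e_1 \cdots e_n = s(e_1) e_1 \cdots e_n = \mu$, so $\mu^{*} v \mu = \mu^{*}\mu$. A telescoping argument using $(CK_1)$ (to collapse each $e_i^{*} e_i$ to $r(e_i) = s(e_{i+1})$) and $(E_1)$ (to absorb $s(e_{i+1}) e_{i+1}$ as $e_{i+1}$) reduces $\mu^{*}\mu$ step by step to $e_n^{*} e_n = r(e_n) = w$. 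Thus $w \in I$, and since $w \in E^0$, $w \in X$.

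For saturation, I would take $v \in E^0$ with $s^{-1}(v) \neq \varnothing$ and with $r(e) \in X$ for every $e$ with $s(e) = v$, and show $v \in I$. Here the relevant tool is $(CK_2)$, which gives $v = \sum_{s(e) = v} e e^{*}$. For each such $e$, relation $(E_1)$ yields $e = e\, r(e)$, and since $r(e) \in I$, we have $e \in I$, hence $e e^{*} \in I$. Summing over the (finite, since $E$ is finite) set $s^{-1}(v)$ places $v$ in $I$, so $v \in X$.

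The only step that requires real care is the telescoping collapse of $\mu^{*}\mu$ to $r(\mu)$; it is conceptually straightforward but should be written as an induction on $n = \deg(\mu)$, with base case $n = 1$ given directly by $(CK_1)$. Everything else is a one-line application of an ideal-closure property together with a single Leavitt path algebra relation, so I do not anticipate any serious obstacle beyond bookkeeping.
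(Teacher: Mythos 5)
Your proof is correct and follows essentially the same route as the paper's: for hereditariness you conjugate $v$ by the path ($\mu^{*}v\mu \in I$) and collapse via $(CK_1)$, and for saturation you use $(CK_2)$ together with the ideal absorbing $r(e)$ in $e\,r(e)\,e^{*}$. The only difference is that you spell out the telescoping reduction of $\mu^{*}\mu$ that the paper dismisses as "repeated applications of $(CK_1)$," which is a reasonable amount of added detail.
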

\begin{proof}
    Consider any $v\in E^0$ such that $v\in I$ and consider also any $\alpha \in Path(E)$ such that $s(\alpha) = v$. Notice that $(\alpha ^*)v(\alpha) = w \in E^0$ must be an element of $I$ (by repeated applications of $(CK_1)$ and the fact that $I$ is an ideal). With this, we have shown that, given a vertex $v\in I$, if there exists a path from $v$ to any other vertex $w\in I$, it must be that $w$ is also in $I$. In other words, $I\cap E^0$ is hereditary.
    
    Now, consider any $v\in E^0$ such that \{$r(e) \mid e\in E^1$ and $s(e) = v$\} $\subseteq I\cap E^0$. Then, $er(e)e^* = ee^*\in I$ for all $e$ such that $r(e) \in$ \{$r(e) \mid e\in E^1$ and $s(e) = v$\} (because $I$ is an ideal). Hence, it follows that $v = \sum_{s(e)=v} ee^* \in I$ (by $(CK_2)$), or in other words, $v\in I\cap E^0$, which means $I\cap E^0$ is saturated, as desired.
\end{proof}

\begin{thm} [Abrams, Ara, Molina (Reduction Theorem)]
    Let $E$ be an arbitrary graph and $K$ any field. For any nonzero element $\alpha\in L_K(E)$ there exist $\mu,\eta\in Path(E)$ such that either:
    \begin{itemize}
        \item [($i$)] $0\neq\mu^*\alpha\eta = kv$ for some $k\in K, k\neq0$, and $v\in E^0$, or
        \item [($ii$)] $0\neq\mu^*\alpha\eta = p(c)$, where $c$ is a cycle without exits and $p(x)$ is a nonzero polynomial in $K[x,x^{-1}]$.
    \end{itemize}
\end{thm}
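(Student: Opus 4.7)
The plan is to exploit Proposition 2.12 and the Cuntz--Krieger relations $(CK_1)$ and $(CK_2)$ to successively shorten $\alpha$ by multiplying on the left by ghost paths and on the right by paths. First, I would write $\alpha = \sum_{i=1}^{n} k_i \gamma_i \delta_i^*$ using Proposition 2.12, choosing a representation minimizing a suitable complexity measure, say the sum $\sum_i(\deg(\gamma_i) + \deg(\delta_i))$ subject to the $\gamma_i\delta_i^*$ being distinct and the $k_i$ nonzero. Since multiplication by a vertex $v\in E^0$ fixes $\gamma_i\delta_i^*$ when $s(\gamma_i) = v$ and kills it otherwise, I can pick $v,w\in E^0$ with $v\alpha w\neq 0$ and replace $\alpha$ by this product; all surviving terms then share common source vertices on each side.

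Next, I would perform the main reduction. Pick a term $\gamma_j\delta_j^*$ whose $\delta_j$ has minimal degree. Multiplying on the right by $\delta_j$ and applying $(CK_1)$ repeatedly collapses $\delta_j^*\delta_j$ to $r(\delta_j)$, while for each $i\neq j$ the factor $\delta_i^*\delta_j$ either vanishes or trims to a strictly shorter ghost factor. Iterating this, and the symmetric procedure on the left using some $\mu^*$, I obtain a nonzero element of the form $\mu^*\alpha\eta = \sum_i k_i\tau_i$ lying in $vL_K(E)v$ for a vertex $v$, where each $\tau_i$ is either $v$ or a closed path based at $v$. If all $\tau_i$ equal $v$ we are immediately in case $(i)$, so I may assume at least one $\tau_i$ is a nontrivial closed path.

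Finally, I would analyze the closed paths at $v$. Every closed path at $v$ traverses some cycle, so if any cycle based at $v$ has an exit---an edge departing a vertex of the cycle to a vertex outside it---then by composing with a short path through that exit and back one can annihilate all closed-path summands while retaining a nonzero vertex term, reducing further to case $(i)$. If no cycle at $v$ has an exit, then the only closed paths at $v$ are powers of a single exit-free cycle $c$, and the element is a nonzero polynomial in $c$ and $c^*$, yielding case $(ii)$. The hardest step will be the iterated reduction in the middle paragraph: controlling the cancellations so that the element remains nonzero throughout, which is why the well-chosen minimality invariant (on either total path degree or ghost-edge degree, cf.\ the definition of $gdeg$) is essential, and why the choice of $\delta_j$ of minimal degree among the surviving terms must be made with care to ensure no unintended coalescence of terms occurs.
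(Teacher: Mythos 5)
First, a point of reference: the paper does not actually prove this theorem --- it is stated with attribution to Abrams, Ara, and Molina and imported as a known result, so the only in-paper material to compare against is the closely related reduction carried out in the proof of Theorem 2. Measured against that argument (and against the standard proof in the literature), your outline has the right overall shape: pass to a corner of $L_K(E)$, strip the ghost edges by right multiplication, reduce to a linear combination of a vertex and closed paths based at it, and then split on whether the relevant cycle has an exit. But the middle step as written does not work. If you right-multiply by the $\delta_j$ of \emph{minimal} degree, the surviving terms become $k_j\gamma_j + \sum_{i}k_i\gamma_i(\delta_i')^*$ with $\delta_i = \delta_j\delta_i'$; the new minimal ghost degree is now $0$, so ``iterating this'' means right-multiplying by a vertex, which does nothing, and the process stalls with ghost edges still present. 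The paper's Theorem 2 proof (and the standard argument) instead multiplies by the $\delta$ of \emph{maximal} ghost degree, which eliminates every ghost factor in a single step.

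Second, the point you flag as ``the hardest step'' is indeed the crux, and it is not a bookkeeping issue that a minimality assumption dispatches on its own: genuine cancellation can occur. Take $\alpha = k\,\gamma\delta^* - k\,(\gamma e)(\delta e)^*$ where $r(\gamma)=r(\delta)=s(e)$ and $s(e)$ emits at least one edge besides $e$. By $(CK_2)$ this equals $k\,\gamma\bigl(\sum_{g\neq e}gg^*\bigr)\delta^*\neq 0$, and both monomials are already in ``simplest form,'' yet right multiplication by the longer ghost path $\delta e$ sends both terms to $\pm k\,\gamma e$ and annihilates the element. So any complete proof must either work with an actual linear basis of $L_K(E)$ or run an induction on ghost degree one edge at a time, choosing at each stage an edge $f$ with $\alpha f\neq 0$ (such an $f$ exists by $(CK_2)$ unless the ghost degree is already $0$); your proposal does not supply this mechanism. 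A smaller issue: in the final step an exit need not lead ``back,'' and no return path is needed --- if $f$ is an exit of $c$ at $w=r(c_1)$ where $c=c_1c_2$, then $(c_1f)^*\,p(c)\,(c_1f)=k_0\,r(f)$ already lands in case ($i$), provided you first arrange a nonzero vertex term by multiplying by a suitable power of $c^*$.
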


The result of this theorem was our inspiration for many of our subsequent results.

\section{\textbf{Main Results}}

\section*{Graded and Non-Graded Ideals: A Graph Theoretic Approach}
\begin{lem}\label{3.1}
    Let $v$ be a vertex such that there is exactly one closed simple path based at $v$, and let that closed simple path be denoted $\lambda_1 = e_1\dots e_n$. Then, we have the following:
    \begin{itemize}
        \item [($i$)] $\lambda_1$ is a cycle.
        \item [($ii$)] For every $s(e_i)$ with $i = 1,\dots ,n$, there is exactly one closed simple path, which is also a cycle, based at $s(e_i)$, namely $\lambda_i = e_i\dots e_ne_1\dots e_{i-1}$. 
        \item[($iii$)] All closed paths based at any $s(e_i)$ are of the form $\lambda_i^k$, $k > 0$, $\lambda_i^0 = s(e_i)$.
    \end{itemize}
\end{lem}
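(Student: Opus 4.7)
The plan is to prove (i), (ii), and (iii) in order, using the hypothesis that $\lambda_1$ is the \emph{unique} closed simple path at $v$ as the central anchor: every time I produce a candidate closed simple path at $v$, I invoke uniqueness to force it to equal $\lambda_1$, and read back structural information about the candidate.

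For (i), I would argue by contradiction. Suppose $s(e_i)=s(e_j)$ with $1\leq i<j\leq n$. If $i=1$ then $s(e_j)=v$, immediately contradicting the fact that $\lambda_1$ is closed simple at $v$. Otherwise $r(e_{i-1})=s(e_i)=s(e_j)$, so the concatenation $e_1\cdots e_{i-1}e_j\cdots e_n$ is a well-defined closed path at $v$ of length $n-(j-i)<n$, and its intermediate sources lie in $\{s(e_2),\dots,s(e_n)\}$, which avoids $v$. That produces a second closed simple path at $v$, contradicting uniqueness. So the $s(e_i)$ are distinct and $\lambda_1$ is a cycle.

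For (ii), the candidate $\lambda_i=e_i\cdots e_ne_1\cdots e_{i-1}$ is a closed path at $s(e_i)$, and part (i) immediately upgrades it to a cycle. For uniqueness, let $\mu=f_1\cdots f_m$ be any closed simple path at $s(e_i)$; I may assume $i\geq 2$, since $i=1$ is tautological. Consider the closed path at $v$ given by $\gamma:=e_1\cdots e_{i-1}\,\mu\,e_i\cdots e_n$, and walk along it tracking the first return to $v$. Since $s(e_k)\neq v$ for $k\geq 2$, any strictly interior return to $v$ in $\gamma$ must arise from some $s(f_{k_0})=v$; take $k_0$ minimal. The prefix $e_1\cdots e_{i-1}f_1\cdots f_{k_0-1}$ is then closed simple at $v$, so by uniqueness it equals $\lambda_1$. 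Matching edges gives $k_0=n-i+2$ and $f_1\cdots f_{n-i+1}=e_i\cdots e_n$. Apply the same analysis to the tail $f_{k_0}\cdots f_m\,e_i\cdots e_n$, which is again a closed path at $v$. If it has no interior return to $v$, it is itself closed simple at $v$, hence equals $\lambda_1$; a length count then forces $f_{k_0}\cdots f_m=e_1\cdots e_{i-1}$, i.e.\ $\mu=\lambda_i$. If instead it has an interior return to $v$, the same matching argument forces the next $n$ edges of $\mu$ to spell out $e_1\cdots e_n$, but this would put $s(e_i)$ at an interior position of $\mu$, contradicting the closed simple hypothesis on $\mu$. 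So $\mu=\lambda_i$.

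Part (iii) then follows by induction on $\mathrm{deg}(\mu)$: given a closed path $\mu$ at $s(e_i)$, cut at its first return to $s(e_i)$ to obtain an initial closed simple subpath at $s(e_i)$, which equals $\lambda_i$ by (ii), and apply the inductive hypothesis to the shorter remainder, also a closed path at $s(e_i)$. The main obstacle throughout is the bookkeeping in (ii): one must patiently verify that each interior return of $\mu$ to $v$ forces the next $n$ consecutive edges of $\mu$ to trace $\lambda_1$, so that the only way to respect the closed simple condition on $\mu$ (which forbids interior visits to $s(e_i)$) is to have exactly one such return, situated precisely so that $\mu$ coincides with $\lambda_i$ edge-for-edge.
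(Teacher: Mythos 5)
Your proof is correct and follows the same basic strategy as the paper's: splice a hypothetical second closed simple path into $\lambda_1$ and invoke uniqueness of the closed simple path at $v$, for (i) and (ii), and then decompose an arbitrary closed path into closed simple paths at its basepoint for (iii). The one place where you genuinely diverge is the case in (ii) where $\mu$ passes through $v$. The paper simply picks some $k$ with $r(f_k)=v$ and asserts that $e_1\cdots e_{i-1}f_1\cdots f_k$ is a closed simple path at $v$ different from $\lambda_1$; this glosses over two points that your first-return iteration handles explicitly: one must take the \emph{first} return to $v$ to guarantee closed simplicity, and even then the resulting path can equal $\lambda_1$ exactly (namely when $\mu$ begins by tracing $e_i\cdots e_n$ back to $v$), in which case no contradiction is immediate. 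Your argument resolves this by matching the prefix against $\lambda_1$, passing to the tail, and showing that a second interior return to $v$ would force an interior visit of $\mu$ to $s(e_i)$, contradicting closed simplicity. So your write-up is somewhat longer, but it is the more watertight of the two; the bookkeeping you flag as the main obstacle is precisely the detail the paper's proof elides.
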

\begin{proof}
    To show ($i$), suppose that $\lambda_1$ is not a cycle, namely, that there exists $i\neq j$ such that $s(e_i) = s(e_j)$. Without loss of generality, suppose $i<j$. Then, notice that both $\lambda_1 = e_1\dots e_n$ and $e_1\dots e_{i-1}e_j\dots e_n$ are closed simple paths based at $v$, a contradiction. Hence, it must be that $\lambda_1$ is a cycle. \\
    Now we prove ($ii$). Suppose that for some $s(e_i)$, $i = 2,\dots ,n$, there exists closed simple path $\mu$ based at $s(e_i)$ such that $\mu\neq\lambda_i$. Then, writing $\mu = f_1\dots f_m$, we have two cases to address. Either $r(f_k) = v$ for some $k = 1,\dots, m$, or $r(f_k) \neq v$ for all such $k$. In the first case, $e_1 \dots e_{i - 1} f_1 \dots f_k$ is a closed simple path based at $v$ not equal to $\lambda_1$, and in the second case, $e_1\dots e_{i-1}\mu e_i\dots e_n$ is such a closed simple path. Regardless, we arrive at a contradiction. Hence, it follows that the only closed simple path based at any $s(e_i)$ is $\lambda_i$. The claim that $\lambda_i$ is a cycle follows immediately from ($i$). With this, ($iii$) now follows from the fact that all closed paths are products of closed simple paths.
\end{proof}

\begin{thm}\label{2}
  Let $E$ be a finite directed graph and $L(E)$ its associated Leavitt path algebra over a field K. All ideals of $L(E)$ are graded if and only if $E$ satisfies Condition (K).
\end{thm}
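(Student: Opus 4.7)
For the ($\Leftarrow$) direction, assume Condition (K) and let $I$ be an arbitrary ideal of $L(E)$; the target is to show that $I = \langle I \cap E^0 \rangle$, which is graded since its generators are homogeneous vertices. Set $H := I \cap E^0$, which is hereditary and saturated by the preceding proposition, and suppose toward contradiction that $I \supsetneq \langle H \rangle$. Appealing to the standard identification of $L(E)/\langle H \rangle$ with the Leavitt path algebra of the subgraph $E \setminus H$ obtained by deleting the vertices of $H$, pick a nonzero $\bar{x} \in I/\langle H \rangle$ and apply the Reduction Theorem inside that quotient. In case (i) I obtain a vertex $v' \in E^0 \setminus H$ lying in $I$, immediately contradicting the definition of $H$. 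In case (ii) I get a cycle $c$ without exits in $E \setminus H$; lifted to $E$, every exit edge from $V(c)$ must land in $H$, and hereditariness of $H$ prevents any return to $V(c)$, so the base $v$ of $c$ has exactly one closed simple path in $E$ — contradicting Condition (K).

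For ($\Rightarrow$) I prove the contrapositive by explicitly constructing a non-graded ideal when Condition (K) fails. Pick $v \in E^0$ with a unique closed simple path; by Lemma \ref{3.1} that path is a cycle $c = e_1 \cdots e_n$. The geometric heart of the argument is to isolate $c$ inside a hereditary saturated set disjoint from $V(c)$: let $X_0$ be the set of ranges of all edges out of $V(c)$ other than the $e_i$, and set $H := T(X_0)$. To verify $V(c) \cap H = \varnothing$, I introduce the auxiliary set $Y := \{u \in E^0 : \text{no path in } E \text{ from } u \text{ to } V(c)\}$, check directly from the definitions that $Y$ is hereditary and saturated and trivially disjoint from $V(c)$, and then argue $X_0 \subseteq Y$: a path from some $w \in X_0$ back to $V(c)$ can be spliced with the exit edge landing at $w$ and an arc of $c$ to yield a second closed simple path at $v$, contradicting uniqueness. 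Minimality of $T(X_0)$ then forces $H \subseteq Y$. With this $H$, every exit of $c$ gets killed in $L(E)/\langle H \rangle$, so $c$ becomes exitless there. I then form the ideal $J := \langle v - c \rangle + \langle H \rangle$; since the degree-$0$ component of $v - c$ is $v$ and its degree-$n$ component is $-c$, showing $J$ is non-graded reduces to showing $v \notin J$, and this in turn reduces, after quotienting, to $\bar v \notin \langle \bar v - \bar c \rangle$ inside $L(E)/\langle H \rangle$. Invoking the standard corner isomorphism $\bar v \, L(E)/\langle H \rangle \, \bar v \cong K[x, x^{-1}]$ (available precisely because $c$ is now exitless), this is equivalent to $1 \notin (1-x) K[x, x^{-1}]$, which holds since $K[x, x^{-1}]/(1-x) \cong K \neq 0$.

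The main obstacles are largely external: the proof leans on two standard facts from the Leavitt path algebra literature not established in the excerpt — the quotient-graph isomorphism $L(E)/\langle H \rangle \cong L(E \setminus H)$ for hereditary saturated $H$, and the corner isomorphism with Laurent polynomials at the base of an exitless cycle — both of which would need to be either cited or quickly re-derived from the defining relations $(V), (E_1), (E_2), (CK_1), (CK_2)$. Beyond these, the most delicate internal step is the hereditariness-trapping argument, which appears in both directions and is what ultimately converts a cycle-without-exits in a quotient graph into a genuine failure of Condition (K) in $E$ itself; it is this step that closes each of the two contradictions.
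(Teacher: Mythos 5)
Your proof is correct, but it takes a genuinely different and more structural route than the paper's. For the forward direction the paper performs an explicit, self-contained reduction: it orders the terms of an arbitrary $\alpha\in I$ by ghost-edge degree, multiplies by $\beta_1$ on the right and $\nu_1^*$ on the left to land on a linear combination of a vertex and closed paths based at it, and then uses the \emph{two} closed simple paths guaranteed by Condition~(K) to strip summands one at a time until only $c_1 r(\beta_1)$ survives. You instead set $H=I\cap E^0$, pass to $L(E)/\langle H\rangle\cong L(E\setminus H)$, and invoke the Reduction Theorem there; your case~(ii) analysis (an exitless cycle in the quotient forces a unique closed simple path in $E$, via hereditariness of $H$) is exactly the right way to close that case, and your trapping argument with the auxiliary set $Y$ is sound. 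For the converse the paper takes the ideal $\langle v+\lambda\rangle$ and argues directly that no product $\delta_1(v+\lambda)\delta_2$ of monomials reduces to $v$, whereas you enlarge the ideal to $\langle v-c\rangle+\langle H\rangle$ precisely so that $c$ becomes exitless in the quotient and the corner isomorphism $\bar v\,L(E)/\langle H\rangle\,\bar v\cong K[x,x^{-1}]$ converts the question into $1\notin(1-x)K[x,x^{-1}]$. Your version is arguably tighter at the one point where the paper is loose (the paper's reduction of general ideal elements to single monomial products $\delta_1(v+\lambda)\delta_2$ glosses over possible cancellation among summands), but it buys this rigor at the cost of two external inputs not established in the paper --- the quotient-graph isomorphism for hereditary saturated $H$ and the corner isomorphism at an exitless cycle --- which you correctly flag and which would need to be cited (both are standard, e.g.\ in Abrams--Ara--Siles Molina) or rederived from $(V)$, $(E_1)$, $(E_2)$, $(CK_1)$, $(CK_2)$ for the argument to stand on its own within this paper. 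Note also that your exit-trapping construction of $H$ in the converse direction is doing work the paper avoids entirely by arguing at the level of monomials with Lemma~\ref{3.1}.
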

\begin{proof} We begin by proving that $L(E)$ has only graded ideals if $E$ satisfies Condition (K). To this end, let $I\subseteq L(E)$ be an arbitrary ideal, and let $\alpha\in I$ be an arbitrary element of $I$. From Proposition \ref{2.12}, we know $\alpha$ can be expressed in the form $\sum_{i=1}^k c_i\alpha_i\beta_i^*$ for some $k\geq1$, where $c_i\in K^\times$, $\alpha_i,\beta_i\in Path(E)$, and $r(\alpha_i) = r(\beta_i)$ $\forall i\in \{1,\dots ,k\}$. Assume $\alpha_i\beta_i^* \neq \alpha_j\beta_j^*$ for all $j\neq i$ and that each $\alpha_i\beta_i^*$ is written in its simplest form (i.e. that all combinations of ghost edges and regular edges that can be reduced to vertices by $(CK_1)$ have been). Lastly, let us order the terms in $\alpha$ such that $deg(\beta_i)\geq deg(\beta_{i+1})$ for $i=1,\dots ,k-1$. In other words, $\alpha$ will be ordered such that its terms are of descending degree in ghost edges. Using this form, we can multiply $\alpha$ by a succession of elements to show that $I$ is graded; that is, if $\alpha\in I$, then writing $\alpha = \sum_{k \in\mathbb{Z}} \alpha_k$ using the direct sum decomposition given by the path-length grading, we obtain that $\alpha_k \in I$ for all $k$. \\

If $\alpha$ is homogeneous, then $\alpha$ satisfies the desired condition. Therefore, we consider the case where $\alpha$ is non-homogeneous. We start by multiplying $\alpha$ by $\beta_1$ from the right. Doing so yields $\alpha\beta_1 = c_1\alpha_1 + \sum_{i=2}^mc_i\alpha_i\gamma_i$, where $m\leq k$ (since some terms might reduce to 0 via $(CK_1)$), and $r(\alpha_1) = r(\gamma_i) = r(\beta_1)$ $\forall i = 2,\dots ,m$. Because the terms in $\alpha$ were ordered above by descending ghost-edge degree, $\beta_1$ is as least as long as $\beta_i$. By $(CK_1)$, any term in which $\beta_i^*$ is not reduced to $r(\beta_i)$ through multiplication by $\beta_1$ reduces to 0, so in each remaining term, it must be that $\beta_i\gamma_i = \beta_1$. \\

Now, $\alpha\beta_1$ is a linear combination of elements $\nu_i := \alpha_i\gamma_i\in Path(E)$ that share the range $r(\beta_1)$. We can arrange the terms of $\alpha\beta_1$ in increasing degree order, i.e. $\sum_{j=1}^m c_j\nu_j$ where $deg(\nu_j)\leq deg(\nu_{j+1})$ for $j = 1,\dots ,m-1$. Now, we multiply $\alpha\beta_1$ by $\nu_1^*$ from the left, yielding $\nu_1^*\alpha\beta_1 = c_1r(\nu_1) + \sum_{j=2}^n c_j\mu_j$, where $n\leq m$ and $\nu_j = \nu_1 \mu_j \frall j = 1,\dots ,n$.\footnote{A caveat in notation here is that the $\nu_1$ and $\beta_1$ do not, in general, correspond to the same term of $\alpha$ because terms are removed and rearranged between the assignment of the subscript of $\beta_1$ and that of $\nu_1$.} The last statement here is forced by $(CK_1)$ and the increasing-degree order of the terms in the same way as the relationship between remaining terms in $\alpha\beta_1$ above. Vitally, this operation mandates that every term afterwards ends in a closed loop $\mu_j$: each $\nu_j$ has range $r(\beta_1)$, but simultaneously begins with $\nu_1$, which has the same range, meaning the addendum, which we call $\mu_j$, must be a closed loop. Therefore, we have $s(\mu_j) = r(\nu_j) = r(\beta_1) = r(\mu_j)$ $\forall j = 1,\ldots,n$, so that $\nu_1^*\alpha\beta_1$ is a linear combination of $r(\beta_1)$ and closed paths based at it. \\

As Condition (K) holds on $E$, each vertex $v\in E^0$ has 0 or at least 2 closed simple paths based at $v$. In this case, if there are no closed simple paths, and therefore no closed paths at all, based at $r(\beta_1)$, then $\nu_1^*\alpha\beta_1$ is simply $c_1r(\beta_1)$, so $ r(\beta_1)\in I$.
\\

Now suppose instead that there are at least 2 closed simple paths based at $r(\beta_1)$ and that $\nu_1^*\alpha\beta_1$ includes a sum of closed paths based at $r(\beta_1)$, as described above. Taking two arbitrary closed simple paths based at $r(\beta_1)$, we call them $\eta_1$ and $\eta_2$. Using these two paths, we can reduce $\nu_1^*\alpha\beta_1$ through a recursive process of multiplying by $\eta_1^*$ or $\eta_2^*$ from the left and $\eta_1$ or $\eta_2$ from the right. \\

If multiplying $\nu_1^*\alpha\beta_1 = c_1r(\beta_1) + \sum_{j=2}^n c_j\mu_j$ by $\eta_1^*$ from the left does not reduce any terms in the sum to zero via $(CK_1)$, then each $\mu_j$ begins with $\eta_1$, so multiplying by $\eta_2^* \neq \eta_1^*$ from the left eliminates every term besides $c_1r(\beta_1)$, and we have that $\eta_2^*\nu_1^*\alpha\beta_1 = c_1\eta_2^*$. Thus, multiplying from the right by $c_1^{-1}\eta_2$ yields $\eta_2^*\nu_1^*\alpha\beta_1c_1^{-1}\eta_2 = c_1\eta_2^*c_1^{-1}\eta_2 = r(\beta_1) \in I$.
\\

Alternatively, suppose that multiplying $\nu_1^*\alpha\beta_1$ by $\eta_1^*$ from the left does eliminate some terms in the sum. In this case, we have $\eta_1^*\nu_1^*\alpha\beta_1 = c_1\eta_1^* + \sum_{i=1}^q c_i\tau_i$, where $q < n$. Then, multiplying from the right by $\eta_1$ yields $\eta_1^*\nu_1^*\alpha\beta_1\eta_1 = c_1r(\beta_1) + \sum_{i=1}^qc_i\lambda_i$, where $\lambda_i$ is a closed path based at $r(\beta_1)$ $\forall i = 1,\dots ,q$, and we have returned to the same initial conditions but with strictly fewer summands. It follows that by repeatedly multiplying by $\eta_1^*$ on the left and $\eta_1$ on the right, followed by $\eta_2^*$ on the left and $\eta_2$ on the right, $\nu_1^*\alpha\beta_1$ will eventually be reduced to $c_1r(\beta_1)$. \\

In all cases, we have that $r(\beta_1)\in I$, so that $c_1\alpha_1r(\beta_1)\beta_1^* = c_1\alpha_1\beta_1^*\in I$. Hence, subtracting $c_1\alpha_1\beta_1^*$ from $\alpha$ yields another element of $I$; notice that this new element is simply $\alpha$ without its first term. We can therefore repeat the entire process described thus far with $\alpha - c_1\alpha_1\beta_1^*$, showing that $c_2\alpha_2\beta_2^*\in I$, and consider $\alpha - c_1\alpha_1\beta_1^* - c_2\alpha_2\beta_2^*\in I$. Continuing in this fashion, we can show that every summand in $\alpha$ is in $I$, as desired.\\

To prove the converse, we prove that if $E$ does not satisfy Condition (K), then $L(E)$ has a non-graded ideal. To this end, supposing $E$ does not satisfy Condition (K), there exists a vertex $v\in E^0$ such that there is exactly one closed simple path based at $v$. Let this closed simple path be $\lambda$, where $r(\lambda) = s(\lambda) = v$, and consider the ideal generated by $v+\lambda$. We claim that this ideal is non-graded.
First, we observe that such an ideal consists of all elements of the form $\delta_1(v+\lambda)\delta_2$, where $\delta_1,\delta_2\in L(E)$. \\

To prove that $\langle v+\lambda\rangle$ is a non-graded ideal, it is sufficient to prove that $v$ is not in $\langle v+\lambda\rangle$. That is, we prove that no $\delta_1,\delta_2\in L(E)$ satisfy $\delta_1 (v+\lambda) \delta_2 = v$. Notice that it is sufficient to consider solely the cases where $\delta_1,\delta_2$ are monomials because otherwise, the expression can be decomposed into a sum of expressions where this is true, and this sum is equal to $v$ if and only if one term, resulting from multiplication by a specific pair of monomials in $\delta_1$ and $\delta_2$, is $v$, and the rest are zero. Let us rewrite $\delta_1 = c_1\alpha_1\beta_1^*$ and $\delta_2 = c_2\alpha_2\beta_2^*$, where $c_1,c_2\in K\setminus\{0\}$, $\alpha_1,\alpha_2,\beta_1,\beta_2\in Path(E)$, $r(\alpha_1) = r(\beta_1)$, and $r(\alpha_2) = r(\beta_2)$. First, notice that our choice of $\delta_1(v+\lambda)\delta_2 = c_1c_2\alpha_1\beta_1^*(v+\lambda)\alpha_2\beta_2^* = c\alpha_1\beta_1^*(v+\lambda)\alpha_2\beta_2^*$ reduces to $0$ unless $s(\beta_1) = s(\alpha_2) = v$, so we narrow down our investigation to cases where this holds. Furthermore, notice that $c\alpha_1\beta_1^*(v+\lambda)\alpha_2\beta_2^*$ is a vertex only if $\beta_1^*(v+\lambda)\alpha_2$ reduces to a vertex, but the uniqueness of the closed simple path based at $v$ implies that is impossible. Namely, in order for $\beta_1^*(v+\lambda)\alpha_2$ not to vanish, we need $\beta_1$ and $\alpha_2$ to be closed paths based at $v$. By Lemma \ref{3.1}, all closed paths based at $v$ are of the form $\lambda^k$ for $k\geq0$, where $\lambda^0 = v$. In this case, we see that neither $\beta_1^*v\alpha_2$ nor $\beta_1^*\lambda\alpha_2$ reduce to 0 via $(CK_1)$, and $deg(\beta_1^*\lambda\alpha_2) - deg(\beta_1^*v\alpha_2) = n$. This means that $\beta_1^*(v+\lambda)\alpha_2$ is always a non-homogeneous element, which implies $\langle v+\lambda\rangle$ is a non-graded ideal, completing the proof. \end{proof} 

\begin{cor}
    There is a bijection between graded ideals and (the set of) hereditary saturated sets of vertices.
\end{cor}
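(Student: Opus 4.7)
The plan is to exhibit two mutually inverse maps. Define $\Phi(I) = I \cap E^0$ for any graded ideal $I$, and $\Psi(H) = \langle H \rangle$, the two-sided ideal generated by $H$, for any hereditary saturated $H \subseteq E^0$. The earlier proposition showing $I \cap E^0$ is hereditary and saturated for every ideal $I$ makes $\Phi$ well-defined. Since each vertex is homogeneous of degree zero, the proposition stating that ideals generated by homogeneous elements are graded makes $\Psi$ land in the set of graded ideals. It then suffices to verify $\Psi \circ \Phi = \mathrm{id}$ and $\Phi \circ \Psi = \mathrm{id}$.

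For $\Psi(\Phi(I)) = I$, the inclusion $\langle I \cap E^0 \rangle \subseteq I$ is immediate, so the work is to show every $\alpha \in I$ already lies in $\langle I \cap E^0 \rangle$. I would adapt the reduction argument from the proof of Theorem \ref{2}: graded-ness of $I$ reduces us to the homogeneous case, and writing $\alpha = \sum c_i \alpha_i \beta_i^*$ in the normal form of Proposition \ref{2.12} ordered by descending ghost-edge degree, the same right-multiplication by $\beta_1$ followed by left-multiplication by an appropriate $\nu_1^*$ yields an element of $I$ equal to $c_1 r(\beta_1)$ plus closed paths at $r(\beta_1)$ of strictly positive path-length degree. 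Projecting onto the degree-zero component, which is permitted because $I$ is graded, gives $r(\beta_1) \in I \cap E^0$, hence $c_1 \alpha_1 \beta_1^* = c_1 \alpha_1 r(\beta_1) \beta_1^* \in \langle I \cap E^0 \rangle$. Subtracting this summand from $\alpha$ and iterating over the remaining terms exhausts the sum. Notice that, unlike in Theorem \ref{2}, Condition (K) is not required here because graded-ness eliminates the higher-degree residuals for free.

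The direction $\Phi(\Psi(H)) = H$ is the more delicate one. The inclusion $H \subseteq \langle H \rangle \cap E^0$ is obvious; the reverse requires showing that forming the generated ideal introduces no new vertices. My approach would begin with an explicit spanning description of $\langle H \rangle$: using Proposition \ref{2.12} together with the relations $(V)$ and $(CK_1)$, every element is a $K$-linear combination of monomials of the form $\alpha \beta^*$ with $r(\alpha) = r(\beta) \in H$. If some vertex $w \in E^0$ is expressible as such a combination, a path-length degree count and an analysis of the $(CK_1)$-cancellations forces the combination to collapse in a way that places $w$ in the hereditary saturated closure of $H$, which by assumption equals $H$.

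The main obstacle will be this last step: the combinatorial bookkeeping needed to show $\langle H \rangle \cap E^0 \subseteq H$ is where both hereditariness (propagating along paths out of vertices of $H$) and saturation (absorbing vertices whose outgoing edges all terminate in $H$) become essential; dropping either property would allow $\langle H \rangle$ to reach vertices outside $H$.
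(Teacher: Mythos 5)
Your proposal takes essentially the same route as the paper's proof: both rest on the earlier proposition that $I \cap E^0$ is hereditary and saturated, on the reduction from Theorem \ref{2} to show that a graded ideal is generated by its vertices, and on the claim that $\langle H \rangle \cap E^0 = H$ for hereditary saturated $H$. Your replacement of Condition (K) by projection onto the degree-zero homogeneous component is a worthwhile clarification the paper glosses over, and the step you flag as the ``main obstacle'' is exactly the step the paper also asserts without argument.
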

\begin{proof}
    The set of vertices in any ideal (and thus any graded ideal) is hereditary and saturated by Proposition 2.28, and the reduction done in the proof of Theorem 2 demonstrates that any graded ideal is generated by a set of vertices, as all non-graded generators can be multiplied into vertices. Meanwhile, two distinct hereditary saturated sets of vertices cannot generate the same ideal because each set of generating vertices is also the set of vertices contained in the resulting ideal. 
\end{proof}

\begin{definition}
   Let $v$ be a vertex such that there is exactly one cycle based at $v$ (we will call such a vertex a \textbf{K1 vertex}), and let $\lambda$ be this cycle (we will call cycles of this type \textbf{K1 cycles}). Consider some Laurent polynomial (excluding monomials) made up of powers of $\lambda$, where $\lambda^0 = v$ and $\lambda^{-1}=\lambda^*$ (e.g. $\lambda^2 + \lambda + (\lambda^*)^3$). Notice that any such polynomials can be multiplied by $\lambda$'s (from the right) or $\lambda^*$'s (from the left) such that the polynomial has $\lambda^0=v$ as its term with lowest degree. We call such polynomials with lowest degree term $v$ \textbf{cycle polynomials} or \textbf{$\lambda$-polynomials}. We will also denote these polynomials by $p(\lambda)$. 
\end{definition} 

\begin{definition}
    Let $\lambda = e_1\dots e_n$ be the K1 cycle associated with some $\lambda$-polynomial. Consider the set of all edges $f \notin \{e_1,\ldots,e_n\}$ such that $s(f) = s(e_j)$ for some $j \in\{1,\ldots,n\}$. Then, $\bigcup_{f\in\{g\in E^1 \mid s(g) = s(e_j)\}}\{r(f)\}$ is the \textbf{exit range of $\lambda$}.
\end{definition} 

Note that for such an edge $f$, we have that $r(f) \notin \{r(e_1),\ldots,r(e_n)\}$ because otherwise, the condition in the definition of a $\lambda$-polynomial that $\lambda$ is the only cycle based at $v$ would be violated. Similarly, the smallest hereditary set containing $\bigcup_{f\in\{g\in E^1 \mid s(g) = s(e_j)\}}\{r(f)\}$ does not contain any of the vertices in $\{r(e_1),\ldots,r(e_n)\}$.

An ideal generated by a non-homogenous sum of powers of a closed simple path based at a vertex such that there exists exactly one closed simple path based at $v$ is equivalent to an ideal generated by a $\lambda$-polynomial. Namely, it is equivalent to the ideal generated by the $\lambda$-polynomial that results from multiplying the sum by appropriate $\lambda$'s or $\lambda^*$'s. With this in mind, we obtain: 

\begin{cor}
    Any ideal with a $\lambda$-polynomial as one of its generators contains the (graded) ideal generated by the exit range of $\lambda$.
\end{cor}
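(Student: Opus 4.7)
The plan is to show that every vertex in the exit range of $\lambda$ lies in the ideal $I$ containing $p(\lambda)$. Since such vertices are homogeneous of degree zero, the ideal they generate is automatically graded, so this containment is exactly what the corollary demands.

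Write the generator as $p(\lambda) = v + \sum_{i} c_i \lambda^{a_i}$ with $a_i \geq 1$, and fix an exit edge $f \notin \{e_1,\dots,e_n\}$ with $s(f) = s(e_j)$ and $r(f) = w$. The first step is to translate the base of the cycle from $v$ to $s(e_j)$. Let $\alpha = e_1 \cdots e_{j-1}$, interpreted as $v$ when $j=1$, and let $\lambda_j = e_j \cdots e_n e_1 \cdots e_{j-1}$ be the unique cycle based at $s(e_j)$ provided by Lemma \ref{3.1}. Iterated application of $(CK_1)$ gives $\alpha^* \alpha = s(e_j)$ and, by grouping $\lambda^i$ as a product in which each $\alpha$ adjacent to an $\alpha^*$ cancels, one verifies $\alpha^* \lambda^i \alpha = \lambda_j^i$ for every $i \geq 0$. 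Summing over $i$ yields $\alpha^* p(\lambda) \alpha = s(e_j) + \sum c_i \lambda_j^{a_i} \in I$.

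For the second step, I would sandwich this new element of $I$ between $f^*$ on the left and $f$ on the right. Because $f \neq e_j$ but both have source $s(e_j)$, relation $(CK_1)$ gives $f^* e_j = 0$, and hence $f^* \lambda_j^{a_i} = 0$ for every $a_i \geq 1$. Only the constant term survives: by $(E_2)$ together with $(CK_1)$, it contributes $f^* s(e_j) f = f^* f = r(f) = w$. Thus $w = f^*\bigl(\alpha^* p(\lambda) \alpha\bigr) f \in I$, and since the exit range is precisely the collection of such vertices $w$ as $f$ ranges over the exit edges, it is contained in $I$; so too is the graded ideal it generates.

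The main obstacle will be bookkeeping the rotation identity $\alpha^* \lambda^i \alpha = \lambda_j^i$. This is a straightforward but slightly fiddly $(CK_1)$ computation: one must cancel the ghost-edge prefix $\alpha^*$ against the initial segment of each copy of $\lambda$, turning the remaining letters into a cyclic rotation. Once this identity is in place, the rest of the argument is a short and transparent calculation driven entirely by the Cuntz-Krieger relations, with the vanishing $f^* e_j = 0$ doing all the real work.
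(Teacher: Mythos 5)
Your proposal is correct and follows essentially the same route as the paper's proof: conjugate $p(\lambda)$ by an initial segment of the cycle to relocate the basepoint to the source of the exit edge, then sandwich by $f^*$ and $f$ so that $(CK_1)$ kills every positive power of the rotated cycle and leaves $r(f)\in I$. The only cosmetic difference is that the paper first multiplies by $\lambda^k$ to guarantee a constant term, a step your normalization $p(\lambda)=v+\sum_i c_i\lambda^{a_i}$ renders unnecessary.
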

\begin{proof}
   Suppose $I\subseteq L(E)$ is an ideal with some $p(\lambda)$ as one of its generators, where $\lambda^0 = v$. Without loss of generality, we can proceed as if every exit on the cycle is at $v$. To see this, suppose $\lambda = e_1\dots e_n$ and that an exit was at $r(e_i)$. Then, $(e_1\dots e_i)^*p(\lambda)(e_1\dots e_i) = p(e_{i+1}\dots e_ne_1\dots e_i)\in I$. We then simply relabel $r(e_i) = v$ and $e_{i+1}\dots e_ne_1\dots e_i = \lambda$. In other words, returning to our originally fixed $v$, we see that the ideal $I = \langle \dots ,p(\lambda),\dots \rangle$ is equal to the ideal $\langle \dots ,p(e_{i+1}\dots e_ne_1\dots e_i),\dots \rangle$ for any $e_i$.\\
   
   Now, for any exit, we have an edge, which we name $f$, such that $s(f) = v$ and $f\neq e_i$ for any $1\leq i\leq n$. Choose some $k \in \mathbb{Z}$ such that $p(\lambda)$ has a non-zero $\lambda^{-k}$ term, or equivalently, $p(\lambda)\lambda^k$ contains a term $cv$ where $c \in K \setminus \{0\}$. It can be quickly seen that $f^*p(\lambda)\lambda^k f = r(f)$, so we have reduced $p(\lambda)$ to a vertex $r(f)$ outside of $\lambda$. It follows immediately that every vertex in the exit range of $\lambda$ is in $I$, hence the ideal generated by those vertices is contained in $I$, as desired.
\end{proof}

\begin{definition}
    We say an ideal is \textbf{$\lambda$-reducible} if it is generated by a set of the form $$\{p_a(\lambda_a)\}_{a\in A} \cup \{v_j\}_{j\in\{1,\dots ,n\}},$$ 
    where $A$ is an arbitrary indexing set, the $p_a$'s are polynomials, the $\lambda_a$'s are K1 cycles (allowing for repetition), and $\{v_1,\dots ,v_n\}$ is a hereditary, saturated, possibly empty set of vertices.
\end{definition}

In the definition above, we choose to restrict our attention to generating sets with polynomials of K1 cycles because if we allowed for a cycle $\gamma$ based at a vertex $v$ with either zero or two cycles, the reduction process described in the proof of Theorem 2 would allow us to replace any $p(\gamma)$ with $v$. We could then take the hereditary saturated closure of $\{v\} \cup \{v_j\}_{j\in\{1,\dots ,n\}}$ as the vertex portion of our generating set and remove all polynomials made up of powers of cycles based at $v$, including $\gamma$, without changing the generated ideal. 

This definition is reasonable to consider given the results we have because it captures all graded ideals (for which the polynomial set is empty), and it interacts well with the reduction methods we have developed, which will allow us to reduce containments of ideals to divisibility of their generators. To this end, we outline a process below through which we can obtain a canonical generating set for any $\lambda$-reducible ideal. 

Suppose $I$ is a $\lambda$-reducible ideal. Then, by Lemma 3.1, we can view the cycles $\lambda_a$ as cycles disregarding their basepoints because these basepoints can be changed by multiplying by parts of the cycle appropriately. Under this view, we can bundle our family $\{ p_a (\lambda_a) \}_{a\in A}$ into a union $\cup_{i=1}^m \{p_{a_i}(\lambda_i)\}_{a_i\in A_i}$, where $A_i$ is an indexing set for the collection of polynomials of a given cycle $\lambda_i$, and there are a finite number $m$ of these cycles because our graphs are assumed to be finite. 
    
Consider the collection of polynomials $\{p_{a_i}(\lambda_i)\}_{a_i \in A_i}$ of one cycle $\lambda_i$ in this generating set of $I$. Because $K[x]$ is a principal ideal domain, there exists a unique polynomial (up to scalar multiplication) $p_i(x)\in K[x]$ such that each $p_{\alpha_i}(x) = q(x)p_i(x)$ for some $q(x)\in K[x]$ and $p_i(x)$ is some linear combination (viewing $K[x]$ as a $K[x]$-module) of polynomials in $\{p_{a_i}(x)\}_{a_i\in A_i}$. From this, for each $i\in\{1,\dots ,m\}$, we can replace the family $\{p_{a_i}(\lambda_i)\}_{a_i\in A}$ by the singleton $\{p_i(\lambda_i)\}$. Making these replacements, our generating set reduces to $$\{p_i(\lambda_i)\}_{i\in\{1,\dots ,m\}} \cup \{v_j\}_{j\in\{1,\dots ,n\}}.$$ In this simplification process, a given $p_i(\lambda_i)$ becomes the vertex at which $\lambda_i$ is based exactly when the initial generating set contains coprime polynomials of $\lambda_i$. Lastly, as our choices of $p_i$ are unique up to scalar multiplication, the requirement that our polynomials be monic yields a finite list of unique generating polynomials. To obtain our canonical generating set, we can then take our set of vertices to be $I\cap E^0$ and remove any polynomials from our list above made up of powers of cycles based at any of the vertices in $I\cap E^0$. Note that $I\cap E^0$ is the hereditary saturated closure of the union $\{v_j\}_{j\in\{1,\dots ,n\}} \cup \{w_i\}_{i\in\{1,\dots ,k\}}$, where the $w_i$'s are the new vertices added from the case when $p_i(\lambda_i)$ is a vertex.\\

To summarize, given a $\lambda$-reducible ideal $I$, we can express it uniquely as an ideal generated by a set of the form $$\{p_i(\lambda_i)\}_{i\in\{1,\dots ,k\}} \cup (I\cap E^0),$$ where each $p_i$ is monic and the $\lambda_i$'s are distinct K1 cycles not based at any of the vertices in $I\cap E^0$. 

\begin{definition}
    Given a $\lambda$-reducible ideal $I$, let us refer to its unique generating set of the form described above as its \textbf{$\lambda$-reduction}. We will denote this set by $\Lambda(I)$.
\end{definition}

\begin{example}
    Consider the following graph, which we will call $E$, and consider the $\lambda$-reducible ideal $I\subseteq L_{K}(E)$ generated by $\{e + v, e - v, f + w\}$.

    \begin{figure}[H]
        \centering
        \includegraphics[width=0.5\linewidth]{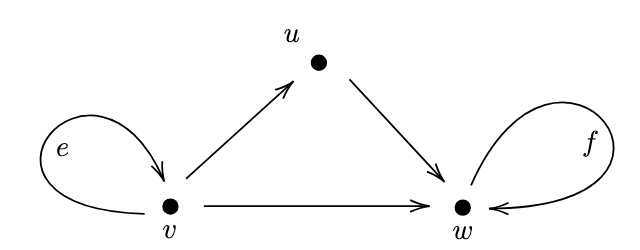}
        \caption{Example 3.8}
        \label{fig2}
    \end{figure}
    
    We will compute $\Lambda(I)$. First, we group our polynomials by cycle to obtain the rewriting of our generating set as $\{e + v, e - v\} \cup \{f + w\}$. Then, taking the greatest common divisors of these sets, we see that $\{e + v, e - v\}$ reduces to $\{v\}$ while $\{f + w\}$ remains the same. Because we have obtained a new vertex $v$ during our reduction of the polynomials, we can take the hereditary saturated closure of $\{v\}$, which yields that $I\cap E^0 = \{u,v,w\}$. Then, as $w\in I$, we have $I = \langle u, v, w, f+w \rangle = \langle u, v, w \rangle$, concluding that $\Lambda(I) = \{u,v,w\}$.

\end{example}

\begin{remark}
    A valuable result we now have is that the $\lambda$-reducible ideals are in bijection with the set of generating sets of the form $\Lambda(I)$. That is, it suffices to consider every ideal generated by a hereditary and saturated set of vertices and a set of monic polynomials of distinct K1 cycles not based at any of the vertices in the hereditary and saturated set of vertices.
\end{remark}

Having simplified our $\lambda$-reducible ideals to ideals generated by sets of this form, we obtain the following proposition: 

\begin{proposition}
    Let $I$ and $I'$ be two non-graded ideals of a given Leavitt path algebra such that their $\lambda$-reductions contain identical sets of vertices. Let us write $\Lambda(I) = \{p_i(\lambda_i)\}_{i\in\{1,\dots ,m\}} \cup \{v_j\}_{j\in\{1,\dots ,n\}}$ and $\Lambda(I') = \{p_i'(\lambda_i)\}_{i\in\{1,\dots ,m\}} \cup \{v_j\}_{j\in\{1,\dots ,n\}}$, where $\{1,\ldots,m\}$ indexes all of the K1 cycles in the graph, meaning some of the $p_i$'s and $p_i'$'s may be the zero polynomial. Then $I \subseteq I'$ if and only if $p_i'$ divides $p_i$ for all $i\in\{1,\dots ,m\}$. 
\end{proposition}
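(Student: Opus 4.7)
The plan is to prove the two implications separately. The reverse direction is straightforward and algebraic, while the forward direction is the main content and proceeds by a quotient construction that isolates the cycle $\lambda_i$ and reduces the divisibility question to an ideal calculation in $K[x,x^{-1}]$.

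For the reverse direction, assume $p_i' \mid p_i$ for every $i$ and write $p_i = q_i p_i'$ in $K[x]$. The $\lambda_i$-polynomials form a subalgebra of the corner $v_i L(E) v_i$ in which multiplication agrees with multiplication in $K[x]$, so $p_i(\lambda_i) = q_i(\lambda_i)\, p_i'(\lambda_i) \in \langle p_i'(\lambda_i)\rangle \subseteq I'$. Since the vertex parts of the two reductions coincide, every generator of $I$ lies in $I'$, yielding $I\subseteq I'$.

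For the forward direction, assume $I\subseteq I'$ and fix $i$; if $p_i = 0$ the divisibility is trivial, so suppose $p_i \neq 0$. Let $H := I\cap E^0 = I'\cap E^0$, let $X$ be the hereditary saturated closure of the exit range of $\lambda_i$, and form the graded ideal $J_0 := \langle H\cup X\rangle$. By the standard identification of quotients by graded ideals in Leavitt path algebras, $L(E)/J_0 \cong L(F)$, where $F$ is obtained from $E$ by deleting the vertices of $H\cup X$ and their incident edges. The canonical form ($v_i \notin H$) and the remark that no cycle vertex of $\lambda_i$ lies in $X$ together imply that $\lambda_i$ persists in $F$, and since every exit of $\lambda_i$ has its range in $X$, it becomes a cycle without exits in $F$. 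A standard computation for cycles without exits then gives $\bar v_i L(F) \bar v_i \cong K[x,x^{-1}]$ with $\bar\lambda_i \mapsto x$.

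The essential structural point is that every other cycle $\lambda_j$ with $p_j'\neq 0$ also becomes exit-free in $F$: by Corollary 3.5 the exit range of each such $\lambda_j$ sits in $H \subseteq H\cup X$, so its exits are killed in the quotient. Combined with Lemma 3.1 (distinct K1 cycles share no vertices), every surviving $\lambda_j$ with $j\neq i$ lies in a connected component of $F$ disjoint from that of $\lambda_i$, forcing $\bar v_i L(F)\bar v_j = 0$. Applying the projection $z\mapsto \bar v_i\, z\, \bar v_i$ to a representation $\sum_k \bar\alpha_k\, \bar g_k\, \bar\beta_k$ of an arbitrary element of $\pi(I')\cap \bar v_i L(F)\bar v_i$ annihilates the terms with $j\neq i$, so this intersection equals $\langle p_i'(x)\rangle$ inside $K[x,x^{-1}]$ (or $\{0\}$ when $p_i' = 0$). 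Since $\pi(p_i(\lambda_i)) = p_i(x)$ lies in this intersection, $p_i(x) \in \langle p_i'(x)\rangle$, and since both polynomials have nonzero constant term (the $\lambda$-polynomial convention), this divisibility descends to $K[x]$, giving $p_i' \mid p_i$. The main obstacle is precisely this disjointness step: without the uniform application of Corollary 3.5 across all cycle generators, contributions from the $j\neq i$ terms could inflate the intersection beyond $\langle p_i'(x)\rangle$.
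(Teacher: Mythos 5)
Your proof is correct, but the forward direction takes a genuinely different route from the paper's. The paper argues by contradiction in two lines: if $p_i' \nmid p_i$, then since $K[x]$ is a PID the monic $\gcd(p_i,p_i')$ is a $K[x]$-combination of $p_i$ and $p_i'$, so $\gcd(p_i,p_i')(\lambda_i)\in I'$ while being a proper divisor of $p_i'$, contradicting the construction of $\Lambda(I')$. That argument is short and elementary, but it silently relies on the claim that $p_i'$ is minimal among \emph{all} polynomials $q$ with $q(\lambda_i)\in I'$, not merely among those arising from the chosen generating set --- a point the paper does not fully justify. Your quotient construction supplies exactly that missing minimality: by killing $H$ together with the hereditary saturated closure of the exit range of $\lambda_i$, you make $\lambda_i$ an exit-free cycle, identify the corner $\bar v_i L(F)\bar v_i$ with $K[x,x^{-1}]$, and show via the disjointness of distinct K1 cycles (Lemma 3.1) and Corollary 3.5 that $\pi(I')$ meets this corner inside $\langle p_i'(x)\rangle$, from which $p_i'\mid p_i$ follows after the constant-term argument; the degenerate case $p_i'=0$ is also handled cleanly. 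The cost is that you import two structural facts the paper never states (that quotients by graded ideals generated by hereditary saturated sets are Leavitt path algebras of quotient graphs, and that the corner at an exit-free cycle is $K[x,x^{-1}]$), and you should say a word about why saturation of $H\cup X$ cannot absorb the vertices of $\lambda_i$ (each cycle vertex emits an edge to another cycle vertex, so none is ever added); but in exchange your argument is arguably more airtight than the paper's at its one delicate step.
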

\begin{proof}
    If $p_i'$ divides $p_i$ for all $i\in\{1,\dots ,m\}$, it follows easily that $I \subseteq I'$. Conversely, suppose that $I\subseteq I'$ but $p_i' \nmid p_i$ for some $i\in\{1,\dots ,m\}$. Then, as $p_i(\lambda_i) \in I \subseteq I'$, both $p_i(\lambda_i)$ and $p_i'(\lambda_i)$ are elements of $I'$. Viewing $p_i$ and $p_i'$ as elements of $K[x]$, it follows that $q(\lambda_i) \in I'$, where $q(x) \in K[x]$ is the greatest common divisor of $p_i$ and $p_i'$, which we assume to be monic. Therefore, we have that $q(\lambda_i) \in I'$, that $q$ divides $p_i'$, and that $q \neq p_i'$. However, by the construction of our $\lambda$-reductions, no such $q(\lambda_i)\in I'$ can exist, so we have a contradiction.  
\end{proof}

\section*{Directed Graphs with Two Vertices}

Having developed some results about the ideals of Leavitt path algebras, we will apply our insights to study the case of Leavitt path algebras induced by graphs with two vertices. 

\begin{remark}
    There is a 1-1 correspondence between Leavitt path algebras with finite generating sets and finite directed graphs, up to isomorphism. 
\end{remark}

\begin{thm}
    Given 2 vertices and k edges, there are $\frac{n(n+1)(3k-4n+1)}{3}+(n+1)\lceil\frac{k+1}{2}\rceil$ Leavitt path algebras up to isomorphism where $n=\lceil\frac{k}{2}\rceil$.
\end{thm}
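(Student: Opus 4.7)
The plan is to invoke Remark 3.12 to reduce the problem to counting finite directed graphs on two vertices with $k$ edges up to graph isomorphism. I would first parametrize such a graph, after labeling its vertices $v_1,v_2$, by the $4$-tuple $(a,b,c,d)\in\mathbb{Z}_{\geq 0}^4$ satisfying $a+b+c+d=k$, where $a$ and $b$ count loops at $v_1$ and $v_2$ while $c$ and $d$ count edges $v_1\to v_2$ and $v_2\to v_1$ respectively. Stars-and-bars then gives $\binom{k+3}{3}$ labeled tuples in total.

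The only non-trivial graph automorphism available in the two-vertex case comes from swapping $v_1$ and $v_2$, which acts on tuples as the involution $\sigma:(a,b,c,d)\mapsto(b,a,d,c)$. Counting graphs up to isomorphism is therefore counting orbits of $\langle\sigma\rangle\cong\mathbb{Z}/2$, for which I would apply Burnside's lemma. A tuple is fixed by $\sigma$ exactly when $a=b$ and $c=d$, equivalently when $2(a+c)=k$; this yields $k/2+1$ fixed tuples when $k$ is even and none when $k$ is odd.

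Setting $n=\lceil k/2\rceil$, Burnside then gives $\tfrac{1}{2}\bigl(\binom{2n+3}{3}+n+1\bigr)$ orbits when $k=2n$ and $\tfrac{1}{2}\binom{2n+2}{3}$ orbits when $k=2n-1$. I would conclude by verifying directly that both expressions coincide with $\frac{n(n+1)(3k-4n+1)}{3}+(n+1)\lceil\frac{k+1}{2}\rceil$ upon substituting $k=2n$ or $k=2n-1$, noting in particular that $\lceil(k+1)/2\rceil$ equals $n+1$ in the even case and $n$ in the odd case. The main obstacle is not conceptual but algebraic: the closed form in the statement packages the two parities into a single expression via the ceiling function, so one must carry out both routine simplifications and confirm they collapse onto the same polynomial identity.
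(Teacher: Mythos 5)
Your proposal is correct, and it reaches the stated formula by a genuinely different route than the paper. The paper fixes the number of loops on each vertex (using the swap symmetry only to impose an ordering on the loop counts), enumerates the remaining edge arrangements case by case, and then assembles and simplifies three explicit series, with some care needed to determine where each series must be truncated. You instead parametrize labeled graphs by compositions $(a,b,c,d)$ of $k$, getting $\binom{k+3}{3}$ by stars and bars, and quotient by the vertex-swap involution via Burnside's lemma; the fixed-point count ($k/2+1$ for even $k$, $0$ for odd $k$) is exactly where the parity dependence, and hence the ceiling functions, enter. I verified the algebra: for $k=2n$ both your orbit count and the stated formula reduce to $\frac{(n+1)(2n^2+4n+3)}{3}$, and for $k=2n-1$ both reduce to $\frac{n(n+1)(2n+1)}{3}$, so the final verification step you defer is routine and succeeds. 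Your argument is shorter, less error-prone, and scales to more vertices (one would just average over a larger symmetry group), whereas the paper's enumeration has the side benefit of explicitly organizing the graphs by loop configuration, a decomposition it reuses informally in the two-vertex classification that follows. Both arguments rest equally on the paper's preceding remark identifying Leavitt path algebras up to isomorphism with graphs up to isomorphism, so no additional gap is introduced on that front.
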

\begin{proof}
     As Leavitt path algebras are uniquely associated to directed graphs, this result is equivalent to the following, which we will prove: given 2 vertices and k edges, there are $\frac{n(n+1)(3k-4n+1)}{3}+(n+1)\lceil\frac{k+1}{2}\rceil$ graphs up to isomorphism, where $n=\lceil\frac{k}{2}\rceil$. To prove this, we begin by fixing the number of loops on each vertex (call them $u$ and $v$) and counting the possible arrangements of edges between them. Furthermore, by symmetry, we fix the number of loops on $u$ to be greater than or equal to the number of loops on $v$. With this in mind, we can start counting the possibilities given some $k$. 
     
    If there are 0 loops on $u$, then $\lceil\frac{k+1}{2}\rceil$ graphs are added. Namely, each of the $k$ vertices can go from $u$ to $v$ or $v$ to $u$, yielding $k+1$ possibilities, but symmetry requires those possibilities to be divided by 2. However, if $k$ is even, then $\frac{k+1}{2}\notin\mathbb{Z}$. Considering more concretely what takes place with an even number of edges, we see that there are $\frac{k}{2} + 1$ different graphs. These graphs are constructed by starting with the graph with all edges facing one direction, then flipping one edge in the opposite direction at a time until half of the edges are facing the opposite direction. Noticing that for odd $k$ we have $\frac{k+1}{2}$ different graphs and for even $k$ we have $\frac{k}{2} + 1 = \frac{k+2}{2}$ different graphs, we see that we have $\lceil\frac{k+1}{2}\rceil$, as claimed. 
    
    If there is 1 loop on $u$, then we have $k$ possible graphs where there are 0 loops on $v$ made from flipping the $k-1$ edges between $u$ and $v$ one at a time, and by a similar argument as the case with 0 loops on $u$, if we have one loop on $u$ and $v$, we get $\lceil\frac{k-1}{2}\rceil$. Hence, we have $k+\lceil\frac{k-1}{2}\rceil$ different graphs.
    
    More generally, we see that given $m$ loops on $u$, we have $(k-m+1) + (k-m) + (k-m-1) +\dots + (k-m-(m-1)) + \lceil\frac{k-(2m-1)}{2}\rceil$ different graphs, where each term adds all graphs with one more loop added to $v$ and the final ceiling term adds the graphs where $v$ also has $m$ loops.

    The sum of all of these possible graphs can now be expressed as the sum of three different sums:
    \begin{itemize}
        \item [($i$)] $k + 2(k-2) + 3(k-4) + \dots $
        \item [($ii$)] $(k-1) + 2(k-3) + 3(k-5) + \dots $
        \item [($iii$)] $\lceil\frac{k+1}{2}\rceil + \lceil\frac{k-1}{2}\rceil + \lceil\frac{k-3}{2}\rceil + \dots $
    \end{itemize}
    However, notice that for fixed $k$, these sums need to be terminated such that negative terms are not counted. So, we must define the sums in such a way that $(k-c)$ terms are not added for any $c>k$. \\
    We begin by combining sums ($i$) and ($ii$). Rearranging terms, we get that together they form the sum $2(1+2+3+\dots )k - ((0+1) + 2(2+3) + 3(4+5) + \dots )$, which has general term $2nk - n(4n-3)$. Notice that this general term breaks into the two terms $n(k - (2n-2))$ and $n(k-(2n-1))$, terms from ($i$) and ($ii$), respectively. We want to terminate this series at an $n$ such that all positive $(k-(2n-2))$ and $(k-(2n-1))$ are included while no negatives are. In other words, we want to find the appropriate $n$ for the series $\sum_{i=1}^n (2ik - i(4i-3))$. \\
    
    \textbf{Claim:} $n = \lceil\frac{k}{2}\rceil$. \\

    To demonstrate this claim, we consider 2 cases: (1) $k$ is even, (2) $k$ is odd. \\
    First, suppose $k$ is even. Then, $n = \frac{k}{2}$, which implies $k = 2n$. Now, we consider the last terms in the sum, namely $n(k-(2n-2))$ and $n(k-(2n-1))$. We see that $n(k-(2n-2)) = 2n\geq 0$ and $n(k-(2n-1)) = n\geq 0$. If we added another term to the series, we would have the terms $(n+1)(k-(2(n+1)-2)) = 0$ and $(n+1)(k-(2(n+1)-1)) = -(n+1) < 0$, which we clearly do not want to include. Hence, for even $k$, we see that $n = \lceil\frac{k}{2}\rceil$ is the appropriate stopping point. \\
    Now, suppose $k$ is odd. Then, $n = \frac{k+1}{2}$, which implies $k = 2n-1$. Once again, we consider the last terms, namely $n(k-(2n-2))$ and $n(k-(2n-1))$. We see that $n(k-(2n-2)) = n\geq 0$ and $n(k-(2n-1)) = 0\geq 0$. If we added another term to the series, we would have the terms $(n+1)(k-(2(n+1)-2)) = -(n+1) < 0$ and $(n+1)(k-(2(n+1)-1)) = -2(n+1) < 0$, which we do not want to include. Hence, for odd $k$, we see that $n = \lceil\frac{k}{2}\rceil$ is the appropriate stopping point as well, proving our claim. \\

    To summarize, combining sums ($i$) and ($ii$) and accounting for when they should stop, we have $\sum_{i=1}^n (2ik-i(4i-3))$, where $n = \lceil\frac{k}{2}\rceil$. Notice that starting the sum at $i=0$ has no effect on the sum, so we will rewrite our sum as $\sum_{i=0}^n 2ik-i(4i-3)$, where $n = \lceil\frac{k}{2}\rceil$. \\
    Now, we consider ($iii$), namely the sum $\lceil\frac{k+1}{2}\rceil + \lceil\frac{k-1}{2}\rceil + \lceil\frac{k-3}{2}\rceil + \dots $. This sum has general term $\lceil\frac{k-(2n-1)}{2}\rceil$, so we have the sum $\sum_{i=0}^n\lceil\frac{k-(2i-1)}{2}\rceil$. Once again, we need to find the appropriate $n$ such that no negative terms are included. \\

    \textbf{Claim:} $n = \lceil\frac{k}{2}\rceil$. \\

    To see this, we study the cases when $k$ is even and $k$ is odd once again. Supposing $k$ is even, we have $n = \frac{k}{2}$, which implies $k = 2n$. With this, we see that $\lceil\frac{k-(2n-1)}{2}\rceil = \lceil\frac{1}{2}\rceil = 1\geq 0$, while the next term $\lceil\frac{k-(2(n+1)-1)}{2}\rceil = \lceil\frac{-1}{2}\rceil = 0$. Seeing that the term after our stopping point is 0 and that subsequent terms would be negative, we see that $n = \lceil\frac{k}{2}\rceil$ is an appropriate stopping point for even $k$. Supposing $k$ is odd, we have that $n = \frac{k+1}{2}$, which implies $k = 2n-1$. Considering the last term in the sum, With this, we see that $\lceil\frac{k-(2n-1)}{2}\rceil = \lceil\frac{0}{2}\rceil = 0$, where any subsequent terms would be less than or equal to 0. With this, we see that $n = \lceil\frac{k}{2}\rceil$ is an appropriate stopping point for odd $k$, as desired. \\

    Hence, we now have that ($iii$) with the appropriate stopping point is $\sum_{i=0}^n\lceil\frac{k-(2i-1)}{2}\rceil$ where $n = \lceil\frac{k}{2}\rceil$. \\
    Noticing that both this sum and the combined sum of ($i$) and ($ii$) can be combined, we get the sum $\sum_{i=0}^n (2ik - i(4i-3) + \lceil\frac{k-(2i-1)}{2}\rceil)$ where $n = \lceil\frac{k}{2}\rceil$. We now manipulate this sum, which yields:

    \begin{align*}
        \sum_{i=0}^n \bigg(2ik - i(4i-3) + \bigg\lceil\frac{k-(2i-1)}{2}\bigg\rceil\bigg) &= \sum_{i=0}^n \bigg((2k+3)i - 4i^2 + \bigg\lceil\frac{k+1-2i}{2}\bigg\rceil\bigg)\\
        &= \frac{n(n+1)(2k+3)}{2} + \frac{2n(n+1)(2n+1)}{3}\\
        &\quad + \sum_{i=0}^n\bigg(\bigg\lceil\frac{k+1}{2}\bigg\rceil - i\bigg)\\
        &= \frac{n(n+1)(6k-8n+5)}{6} - \frac{n(n+1)}{2}\\
        &\quad + (n+1)\bigg\lceil\frac{k+1}{2}\bigg\rceil\\
        &= \frac{n(n+1)(3k-4n+1)}{3}+(n+1)\bigg\lceil\frac{k+1}{2}\bigg\rceil.
    \end{align*} 
\end{proof}

\begin{definition} A \textbf{vertex} is said to be \textbf{K2} if it has at least two closed simple paths based at it. A \textbf{graph} is said to be \textbf{K2} if all of its vertices are K2. We define \textbf{K0} analogously. 
\end{definition}

Note that on vertices, this definition aligns with our definition of K1 given earlier. 

\begin{definition}
    Let $E$ be a graph. We say $E$ is \textbf{elementary} if $L(E)$ has no proper non-trivial ideals (i.e. $L(E)$ is a simple algebra).
\end{definition}

\begin{lem}
    Recall that we denote the hereditary saturated closure of a vertex $v$ as $T(v)$. If $E$ is an elementary graph, then $T(v) = E^0 \frall v\in E^0$. 
\end{lem}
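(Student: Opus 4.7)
The plan is to argue by contrapositive: I assume that there exists some $v \in E^0$ with $T(v) \subsetneq E^0$ and exhibit a proper nonzero ideal of $L(E)$, contradicting the elementarity of $E$.

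First I would form the ideal $I = \langle T(v) \rangle$ generated by the vertices in $T(v)$ inside $L(E)$. Since every vertex is a homogeneous element of degree $0$ under the path-length grading, $I$ is generated by homogeneous elements and is therefore graded by the earlier proposition. Next I would observe that $I$ is nonzero: because $v \in T(v) \subseteq I$ and $v \neq 0$ in $L(E)$ (vertices are linearly independent generators), $I$ is a nontrivial ideal.

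The crucial step is showing that $I$ is also proper, and for this I would invoke the Corollary following Theorem 2, which establishes a bijection between graded ideals and hereditary saturated sets of vertices via $I \mapsto I \cap E^0$. In particular, the proof of that corollary notes that for a graded ideal generated by a set of vertices, the hereditary saturated set recovered as $I \cap E^0$ equals the hereditary saturated closure of the generating set. Applied here, this gives $I \cap E^0 = T(v)$. Since by assumption $T(v) \subsetneq E^0$, there is some $w \in E^0 \setminus T(v)$, and then $w \notin I$, so $I \neq L(E)$. Thus $I$ is a proper nontrivial ideal of $L(E)$, contradicting the hypothesis that $L(E)$ is simple.

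The main obstacle is ensuring that the containment $I \cap E^0 \subseteq T(v)$ really holds, i.e. that no vertex outside $T(v)$ sneaks into $I$ through multiplication. This is exactly the content of the bijection in the corollary to Theorem 2, which I am allowed to cite; without it one would have to re-derive that the vertex-generated graded ideal recovers precisely its generating hereditary saturated closure. Assuming that bijection, the proof is short: pick an arbitrary $v$, form $\langle T(v) \rangle$, and use simplicity of $L(E)$ to force $T(v) = E^0$.
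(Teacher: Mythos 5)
Your proposal is correct and follows essentially the same route as the paper: both arguments assume $T(v) \subsetneq E^0$, form the graded ideal $\langle T(v)\rangle$, and use the fact that a vertex $w \notin T(v)$ cannot lie in this ideal to contradict simplicity. You merely make explicit two points the paper leaves implicit — that the ideal is nonzero and that $\langle T(v)\rangle \cap E^0 = T(v)$ via the corollary to Theorem 2 — which is a welcome clarification rather than a different approach.
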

\begin{proof}
    Suppose towards contradiction that a vertex $v$ on an elementary graph $E$ has $T(v)\neq E^0$, namely, there exists a vertex $w\in E^0 \setminus T(v)$. Then $\langle T(v) \rangle$ is a graded ideal, as discussed in Corollary 3.2, but $\langle T(v) \rangle \neq L(E)$ because $w \notin \langle T(v) \rangle$, contradicting the assumption that $E$ is elementary. Therefore, such a $w$ cannot exist, and $T(v) = E^0$.
\end{proof}

\begin{lem}
    Adding an edge to an elementary K2 graph yields another elementary K2 graph. 
\end{lem}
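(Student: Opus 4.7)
The plan is to show that $E'$ is K2 directly and then reduce elementarity to strong connectivity via Theorem \ref{2} and its corollary. Every closed simple path in $E$ is still a closed simple path in $E'$, since adding an edge removes no edges and alters the source and range of no existing edge, so every vertex of $E'$ retains at least two closed simple paths based at it; hence $E'$ is K2. Because $E'$ is K2 and therefore satisfies Condition (K), Theorem \ref{2} gives that every ideal of $L(E')$ is graded, and the corollary identifies graded ideals with hereditary saturated subsets of $(E')^0 = E^0$. Thus it suffices to show that $\emptyset$ and $E^0$ are the only subsets of $E^0$ that are hereditary and saturated relative to $E'$.

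My intermediate step is to prove the equivalence: a K2 graph $F$ is elementary if and only if $F$ is strongly connected. For the easy direction, if $F$ is K2 and strongly connected, then for any nonempty hereditary $Y \subseteq F^0$ and any $y \in Y$, every vertex is reachable from $y$ along a path in $F$, forcing $Y = F^0$; combined with the correspondence between ideals and hereditary saturated sets under Condition (K), this gives elementarity. For the converse, suppose $F$ is K2 and elementary but not strongly connected, and pick a sink $C \subsetneq F^0$ in the condensation of $F$ into strongly connected components. Then $C$ is hereditary because no edge of $F$ leaves $C$, and $C$ is saturated as well: any $u \notin C$ whose $F$-out-edges all land in $C$ could lie on no closed simple path, since $C$ being a sink in the condensation means no vertex of $C$ can reach $u$, contradicting K2 at $u$. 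Hence $C$ is a nontrivial hereditary saturated set, contradicting elementarity.

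Applying this equivalence, $E$ being K2 elementary forces $E$ strongly connected; since every $E$-path remains an $E'$-path, $E'$ is strongly connected, and applying the equivalence in the reverse direction to $E'$ (already K2) yields the elementarity of $E'$. The main obstacle is the saturation argument for the sink component $C$: without K2, a vertex $u \notin C$ could perfectly well have all its out-edges into $C$ while sitting on no cycle, so $C$ might fail to be saturated and a proper nontrivial hereditary saturated set could persist. The K2 hypothesis is exactly what forecloses this escape route and makes the characterization go through.
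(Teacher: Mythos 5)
Your proof is correct, but it takes a genuinely different route from the paper's. The paper argues directly: it invokes the preceding lemma to get $T(v) = E^0$ for every vertex of the elementary graph $E$, asserts that adding an edge cannot shrink these closures back to proper subsets, and observes that Condition (K) is preserved since closed simple paths are never destroyed, so no non-graded ideals can appear. You instead isolate a structural characterization --- a K2 graph is elementary if and only if it is strongly connected --- and then note that both K2-ness and strong connectivity are trivially monotone under edge addition. Your detour through strong connectivity buys real rigor: the paper's claim that ``$T(v) = E^0$ cannot be made a proper subset of $E^0$ through the addition of edges'' is not justified and is not obvious, since adding an edge out of a vertex $w$ can \emph{weaken} the saturation constraint that previously forced $w$ into a set, so hereditary saturated closures are not monotone under edge addition in general. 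Your sink-component argument (using K2 to rule out a vertex feeding only into the sink component without lying on any closed path) is exactly the point where the hypothesis does the work, and it converts the needed invariance into the transparent fact that adding edges preserves reachability. The cost is length and the need for the intermediate equivalence; the benefit is a reusable characterization and an argument with no gaps.
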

\begin{proof}
    Let $E$ be an elementary K2 graph and $L(E)$ its Leavitt path algebra. By Lemma 1, we know that $T(v) = E^0$ $\frall v\in E^0$. Hence, it follows that adding edges cannot change the structure of the graded ideals of $E$. That is, $T(v) = E^0$ cannot be made a proper subset of $E^0$ through the addition of edges. Furthermore, every vertex in $E$ is K2, so adding edges cannot introduce any non-graded ideals, since doing so would require that a vertex goes from having at least two closed simple paths to one or none.
\end{proof}

\begin{lem}
    Adding a loop to a K2 vertex has no effect on the ideal lattice.
\end{lem}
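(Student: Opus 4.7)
The plan is to reduce the claim to two combinatorial invariants of the graph that, together, control the $\lambda$-reducible ideal lattice developed in this paper: the hereditary and saturated subsets of $E^0$ (which by Corollary 3.2 parametrize graded ideals), and the K1 vertices together with their unique K1 cycles (which, combined with the first datum, parametrize every $\lambda$-reducible ideal via Remark 3.9 and Proposition 3.10). I would show that adding the loop $\ell$ at the K2 vertex $v$ leaves both pieces of data identical in $E$ and $E'$, and then appeal to Proposition 3.10 to conclude that containment in the two lattices is governed by identical polynomial-divisibility and vertex-inclusion conditions.

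For the hereditary saturated sets, heredity depends only on directed reachability between distinct vertices and is unaffected by adding a self-loop, and saturation at every $w \neq v$ is likewise unaffected. At $v$, the saturation condition in $E'$ is trivially satisfied because the new range set $\{r(e):s(e)=v\} \cup \{v\}$ already contains $v$. For $E$, saturation at $v$ reads $\{r(e):s(e)=v\} \subseteq X \Rightarrow v \in X$, and I would show this is automatic whenever $X$ is hereditary using the K2 hypothesis: pick a closed simple path $e_1 \cdots e_n$ based at $v$, so that $r(e_1) \in \{r(e):s(e)=v\} \subseteq X$; then either $n=1$ and $r(e_1)=v$ directly, or $e_2 \cdots e_n$ is a directed path from $r(e_1)$ back to $v$ and heredity forces $v \in X$.

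For the K1 data, I would first note that a new closed simple path at any vertex $w \neq v$ that uses $\ell$ requires directed paths $w \to v$ and $v \to w$ in $E$, which together already yield a closed simple path at $w$ in $E$; hence K0 vertices remain K0 in $E'$. In the two-vertex setting of this section, a K1 vertex $u$ coexisting with the K2 vertex $v$ is classified directly by enumerating closed simple paths at $u$ as loops at $u$ together with paths of shape $u \to v \to (\text{loops at } v)^k \to v \to u$; the finiteness forced by K1 then requires $u$ to carry exactly one loop and to have either no edge to $v$ or no edge from $v$. The unique cycle at $u$ is therefore the loop itself, disjoint from $v$, so $\ell$ creates no new closed simple paths at $u$ and preserves both its K1 status and its K1 cycle. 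Combined with the first step, the parameter sets indexing the $\lambda$-reductions of $L(E)$ and $L(E')$ coincide, and Proposition 3.10 delivers the ideal lattice isomorphism.

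The main obstacle is the K1-preservation step, and this is precisely where the two-vertex hypothesis of this section is essential: in a graph with more vertices, a K1 vertex $w$ whose unique closed simple path passes through $v$ would acquire infinitely many new closed simple paths via powers of $\ell$, collapsing the distinct $\lambda$-polynomial ideals $\langle p(\lambda_w) \rangle$ of $L(E)$ into the single graded ideal $\langle w \rangle$ in $L(E')$ and destroying the lattice correspondence. The limited cycle structure available on two vertices is what rules out this pathology, by forcing any K1 cycle in the presence of a K2 vertex to be a loop disjoint from it.
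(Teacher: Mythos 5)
Your argument for the graded part is essentially the paper's: hereditary and saturated subsets are unchanged because reachability is unaffected by a self-loop and because, for a hereditary set $X$, saturation at the K2 vertex $v$ is automatic (the first edge of a closed simple path at $v$ lands in $\{r(e):s(e)=v\}\subseteq X$ and the rest of that path drags $v$ back into $X$ by heredity). Your treatment of the non-graded part is actually more careful than the paper's, which disposes of it in one sentence by observing that the new loop is not a cycle without exits; your reduction to preservation of the K1 vertices and their cycles, followed by Proposition 3.10, is a legitimate and more explicit way to control the $\lambda$-reducible lattice.

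However, your final paragraph contains a genuine error, and it causes you to prove less than the lemma asserts. You claim that in a graph with more than two vertices a K1 vertex $w$ could have its unique closed simple path pass through the K2 vertex $v$, so that the new loop would create infinitely many closed simple paths at $w$. This situation cannot occur: by Lemma \ref{3.1}($ii$), every vertex lying on the unique closed simple path of a K1 vertex is itself K1, so the K2 vertex $v$ can never lie on such a cycle. More directly, if adding the loop $\ell$ created a new closed simple path at some $w\neq v$, deleting the occurrences of $\ell$ from it would yield a closed simple path at $w$ in the original graph passing through $v$; if $w$ were K1 this would have to be its unique closed simple path, forcing $v$ to be K1 by Lemma \ref{3.1}, a contradiction. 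Hence K0, K1, and K2 statuses and all K1 cycles are preserved in complete generality, the two-vertex hypothesis is not needed anywhere, and your case enumeration of which two-vertex configurations admit a K1 vertex alongside a K2 vertex can be replaced by this one-line appeal to Lemma \ref{3.1}. As written, your proof establishes the lemma only for two-vertex graphs (which does suffice for its use in Theorem 4), while asserting, incorrectly, that the general statement is in jeopardy.
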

\begin{proof}
    Let $v$ be a K2 vertex in a graph $E$. For any hereditary saturated subset $S$, which generates a graded ideal, either $v\in S$ or $v\notin S$. If $v\notin S$, $S$ is still hereditary and saturated because there are no new edges added from $S$ to other vertices, and any vertex that only emits edges into $S$ already did so before the loop was added. If $v\in S$, then $S$ is still a hereditary saturated subset as well after the loop is added, by similar reasoning. \\
    On the other hand, any subset of vertices $S$ that was not originally hereditary and saturated can also either contain $v$ or not. If it doesn't contain $v$, then the only way it might become hereditary and saturated with the added loop is if $v$ emits edges only into $S$. However, since $v$ is K2, the existence of its closed simple paths implies that $v\in S$ already by the hereditary property, creating a contradiction with the assumption. If $v\in S$, then $S$ will also still not be hereditary and saturated when the loop is added. \\
    Hence, adding a loop to $v$ does not affect which sets are both hereditary and saturated, and therefore leaves the lattice of graded ideals unchanged. The loop also does not affect non-graded ideals because it does not create any cycles without exits: both closed simple paths based at $v$ have the other as an exit. Therefore, the loop does not affect the ideal lattice of the graph's Leavitt path algebra.
\end{proof}

\newpage

\begin{thm}
    For any Leavitt path algebra of a graph with two vertices, there are nine classes of ideal lattices of $\lambda$-reducible ideals, up to isomorphism. The classes have the following forms:

    \begin{figure}[H]
        \centering
        \includegraphics[width=0.8\linewidth]{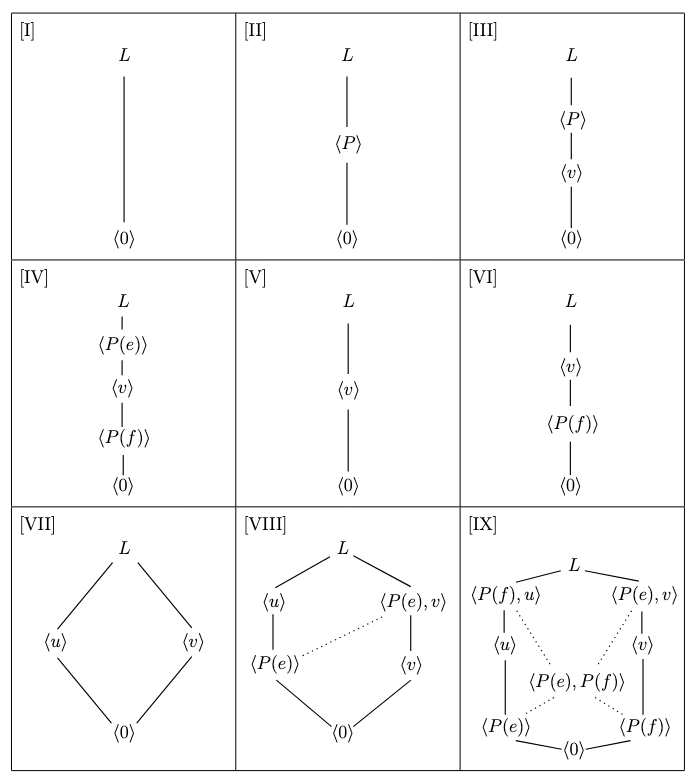}
        \label{fig3}
    \end{figure}
    
\end{thm}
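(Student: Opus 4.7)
The plan is an exhaustive case analysis on $2$-vertex directed graphs, reducing via the preceding lemmas and reading off each lattice from the canonical $\lambda$-reduction. The key structural reduction is the lemma that adding (or symmetrically removing) a loop at a K2 vertex does not affect the ideal lattice, which lets us assume every K2 vertex carries the minimum number of loops consistent with remaining K2. After this reduction, a graph is specified, up to ideal-lattice isomorphism, by the cross-edge pattern ($a = 0$, $b = 0$, or $a, b \geq 1$, where $a, b$ count the edges between $u$ and $v$ in each direction), by normalized loop counts $\ell_u, \ell_v \in \{0, 1, {\geq}2\}$, and by the resulting K-type of each vertex.

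Next I would enumerate by the K1-cycle configuration. In a $2$-vertex graph the simple cycles have length $1$ (loops) or length $2$ (cross cycles using one edge each way), so by Lemma \ref{3.1} the K1-cycle configurations are: no K1 cycle; one K1 loop on a single vertex; two K1 loops (which forces $a = b = 0$ and $\ell_u = \ell_v = 1$); or one K1 cross cycle (which forces $a = b = 1$ and $\ell_u = \ell_v = 0$). For each configuration I would determine, by direct check of the hereditary and saturated conditions, which of the three possible hereditary-saturated subset lattices $C_2$, $C_3$, or the square $SQ$ is realizable.

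Combining these pieces yields nine structural classes: three ``no K1 cycle'' classes (one per h.s.\ lattice); four ``one K1 loop'' classes distinguished by the h.s.\ lattice and, in the $C_3$ case, by whether the middle h.s.\ element contains the K1 cycle's base vertex; one ``two K1 loops with h.s.\ $= SQ$'' class; and one ``two K1 loops with h.s.\ $= C_3$'' class. A crucial merger is that the ``single K1 loop with h.s.\ $= C_2$'' case and the single K1 cross-cycle case produce the same abstract lattice, since in both the cycle has no exits and the only h.s.\ subsets are $\emptyset$ and $\{u, v\}$; the lattice in either case reduces to a polynomial divisibility lattice as the fiber over $\emptyset$ with a single top element glued above. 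For each class I would then use the preceding proposition (on containment of $\lambda$-reducible ideals via polynomial divisibility) to describe the lattice as a fibered union over h.s.\ subsets $S$, where the fiber over $S$ is a product of polynomial divisibility lattices indexed by the K1 cycles whose base vertex lies outside $S$.

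The main obstacle is bookkeeping: I must verify the $C_2$-merger above (so that the count is nine and not ten), and confirm that the two $C_3$ placements in the ``one K1 loop'' case yield non-isomorphic lattices (since absorption of the K1 polynomial by the middle h.s.\ vertex occurs in one case but not the other). Pairwise non-isomorphism of all nine resulting lattices would then be confirmed by comparing invariants such as height, the number and positions of infinite polynomial sub-lattices, and the local structure near the top and bottom elements of each Hasse diagram.
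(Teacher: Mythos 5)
Your approach is essentially the paper's: an exhaustive case analysis of two-vertex graphs, normalized by the loop-at-a-K2-vertex lemma, with each lattice read off from the hereditary-saturated subsets together with polynomial-divisibility fibers attached to K1 cycles based outside them. You organize the cases by K1-cycle configuration crossed with the hereditary-saturated lattice, where the paper organizes by loop-count pairs and cross-edge patterns (sixteen explicit graphs), but both bookkeepings land on the same nine classes, including the merger of the exit-free cross cycle with the exit-free single loop (the paper's graphs [3] and [7], both yielding lattice [II]).

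One slip to correct: two K1 loops do \emph{not} force $a=b=0$. One loop on each vertex together with a one-way cross edge leaves both vertices K1, and that graph is precisely the one realizing your ``two K1 loops with h.s.\ $=C_3$'' class (the paper's graph [10], lattice [IV]); only $a\ge 1$ and $b\ge 1$ simultaneously is excluded, since a cross cycle would give every looped vertex a second closed simple path. As written, the parenthetical contradicts your own later tally and, taken literally, would reduce the count to eight. Separately, your claim that the coarse data (cross-edge pattern with $a,b$ recorded only as $0$ or ${\ge}1$, normalized loop counts, and K-types) determines the lattice needs the paper's lemma that adding edges to an elementary K2 graph preserves elementarity and K2-ness; without it, identifying, say, $(a,b)=(2,1)$ with $(3,2)$ is unjustified, and note that $(a,b)=(1,1)$ with no loops is genuinely different from $(2,1)$ with no loops, so the K-type really must be carried along as you propose.
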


\begin{remark}
    Here, $\langle u \rangle$ and $\langle v \rangle$ represent ideals generated by single vertices, while $L$ and $\langle 0 \rangle$ represent the entire algebra and the trivial ideal. On the other hand, $\langle P(e) \rangle$, $\langle P(f) \rangle$, and $\langle P \rangle$ indicate not singular ideals but rather families of non-graded ideals generated by polynomials of loop $e$, loop $f$, and the only loop present in a graph, respectively. Lastly, solid lines indicate containment, with the visually lower ideal being contained in the higher one, whereas dotted lines represent partial containment of one family of ideals in another family of ideals (for example, in lattice [VIII], some ideals in $\langle P(e) \rangle$ are contained in some ideals in $\langle P(e), v \rangle$). 
\end{remark}

\begin{proof} 
Let us consider graphs $E$ with $E^0 = \{ u,v \}$. By Lemma 3.12, we begin by noticing that, in terms of $\lambda$-reducible ideal lattice structure, a vertex with two loops is identical to a vertex with any number of loops greater than two. Hence, in terms of loops, we see that we only need to consider vertices having exactly 0, 1, or 2 loops. We can narrow down the relevant graphs by fixing the number of loops on each vertex and counting graphs based on the number of edges between the vertices. Furthermore, in the process of fixing the number of loops, we can use symmetry to force the number of loops on $u$ to be greater than or equal to the number of loops on $v$. This yields the cases (0,0), (1,0), (1,1), (2,0), (2,1) (2,2), where we have number of loops on $u$ in the first entry and number of loops on $v$ in the second. Now, because adding loops simply yields to another of these cases, we only need to consider the edges between them. 

First, notice that with respect to $\lambda$-reducible ideal lattice structure, having one edge pointing in one direction is the same as having arbitrarily many edges pointing only in that direction, since neither hereditary nor saturated subsets are affected by the addition of more edges in one direction. Furthermore, notice that for cases (0,0), (1,1), (2,2), the directed edges $uv$ and $vu$ yield identical graphs, so we only need to consider one of the two graphs. \\

For (0,0), we have graphs [1] (2 disjoint vertices), [2] (2-line), [3] ($uv$, $vu$), [4] ($uv, uv, vu$). \\

Notice that [4] is elementary K2, so we no longer need to consider the case where there are three or more edges that aren't all in the same direction by Lemma 2. Therefore, we only need to consider no edges between, $uv$, $vu$, and both $uv$ and $vu$. \\

Going on to (1,0), we have [5] (no edge), [6] ($uv$), [7] ($vu$) [8] ($uv, vu$). \\

Notice that [8] is also an elementary K2 graph, so for the remaining cases, because there is at least one loop at one vertex, there is no longer any need to consider the cases with both $uv$ and $vu$ by Lemma 2. Hence, for the remainder of the cases, there is only a need to consider no edges, $uv$, and $vu$. \\

Now we address (1,1). This is symmetrical, so we only add [9] (no edges) and [10] ($uv$). \\

For (2,0), we have [11] (none), [12] ($uv$), [13] ($vu$). \\

For (2,1), adding no edges yields the same lattice as graph [5], so we only have [14] ($uv$), [15] ($vu$). \\

For (2,2), none is identical to [1], so we only have [16] ($uv$). \\

All of the graphs listed above are in the following table:

\begin{figure}[H]
    \centering
    \includegraphics[width=0.9\linewidth]{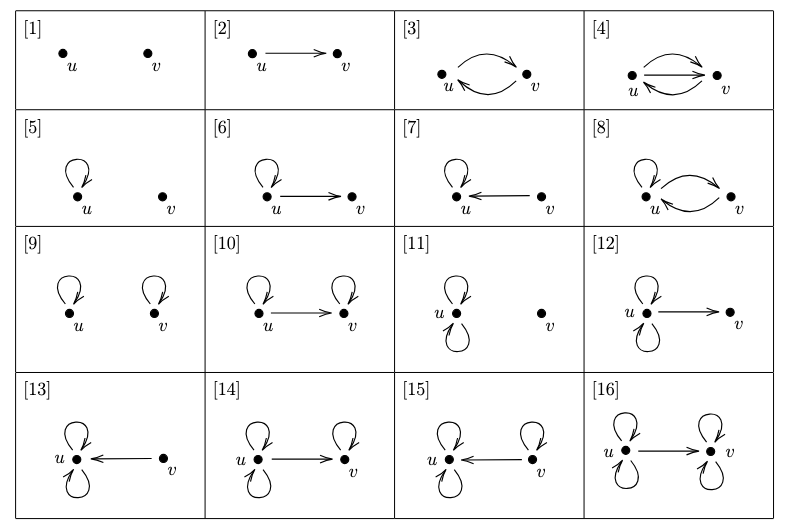}
    \caption{Types of 2-vertex directed graphs}
    \label{fig4}
\end{figure}

We observe that adding any edges to any of these graphs yields an equivalent case, hence we need only to consider the $\lambda$-reducible ideal lattices of these 16 graphs, and this is equivalent to considering hereditary saturated sets of vertices along with polynomials of cycles based at vertices not in our hereditary saturated sets. From this, it follows that the $\lambda$-reducible ideal lattices of the 16 graphs above are: \\

\begin{align*}
    [\text{I}] &\text{ for graphs [2],[4],[8], and [13]}\\
    [\text{II}] &\text{ for graphs [3] and [7]}\\
    [\text{III}] &\text{ for graphs [6] and [15]} \\
    [\text{IV}] &\text{ for graph [10]}\\
    [\text{V}] &\text{ for graphs [12] and [16]}\\
    [\text{VI}] &\text{ for graph [14]}\\
    [\text{VII}] &\text{ for graphs [1] and [11]}\\
    [\text{VIII}] &\text{ for graph [5]}\\
    [\text{IX}] &\text{ for graph [7]}.
\end{align*} 
\end{proof}

\section{Future Directions}

We now present some of the further questions that we did not investigate during this project. The first and most obvious of these questions is how the classification of ideals extends beyond just the $\lambda$-reducible ones. Is there such a nice classification in general? Are there graphs besides those satisfying Condition (K) with only $\lambda$-reducible ideals?

A second natural extension of our work is to try to produce analogous results for Leavitt path algebras with more than two vertices. Both symmetry and simplicity make the case of digraphs with two vertices a lot more manageable than even digraphs with three vertices, but perhaps there are different techniques that can be used.

Stepping further beyond the scope of what we studied, we also found ourselves wondering if there was a way to study the effects of turning or attaching edges on the Leavitt path algebra of a given digraph. There is the obvious change in the generators/relations, but the question is if there is a nicer characterization of the effect of such maneuvers that relates more directly to the Leavitt path algebra of the graph prior to the manipulation. If so, then we may be able to build the Leavitt path algebras of larger graphs out of those of smaller graphs. 

Finally, we also wondered if Morita Equivalence of two Leavitt path algebras would inform us of any similarities in their algebraic properties:

\begin{definition}
    An \textbf{equivalence of categories} consists of functors $F : \mathcal{C} \to \mathcal{D}$, $G : \mathcal{D} \to \mathcal{C}$ such that the composite functors $FG : \mathcal{D} \to \mathcal{D}$ and $GF : \mathcal{C} \to \mathcal{C}$ are both naturally isomorphic to the appropriate identity functors. In such a case, we say the categories $\mathcal{C}$ and $\mathcal{D}$ are \textbf{equivalent}. 
\end{definition}

Given a $K$-algebra $A$, the category of left modules over $A$, denoted $A\text{-}\mathsf{Mod}$, has as its objects left $A$-modules and morphisms left $A$-module homomorphisms. 

\begin{definition}
    Two $K$-algebras $A$ and $B$, are said to be \textbf{Morita equivalent} if the categories $A\text{-}\mathsf{Mod}$ and $B\text{-}\mathsf{Mod}$ are equivalent. 
\end{definition}

The connections between Morita equivalence and Leavitt path algebras are a subject of active study \cite{bock2024morita}\cite{clark2017subsets}\cite{kocc2022classification}.

\section{Acknowledgments} 

First, we thank Professors Ivan Contreras and Daniel van Wyk for mentoring the REU in which this research took place and guiding us through the writing process of this paper. The authors also thank our research partners Alex Kupersmith, Emma Keenan, and Ephrata Getachew, with whom we collaborated on the project. Finally, we thank the Amherst College Summer Undergraduate Research Fellowship (SURF) Program for organizing the REU and providing us with the opportunity to participate. 

\bibliographystyle{plain}
\bibliography{refs}
\nocite{*}

\end{document}